\begin{document}

\title{\sc Symplectic excision}
\author{Yael Karshon \qquad Xiudi Tang}
\date{\today}
\maketitle

\begin{abstract}
  We use time-independent incomplete Hamiltonian flows to excise interesting closed subsets of positive codimension from symplectic manifolds.
  Examples of such subsets include what we call a ``Cantor brush'', a ``box with a tail'', and -- more generally -- epigraphs of lower semicontinuous functions.
  This answers a question of Alan Weinstein about excision of a ray, and it generalizes a result of Bernd Stratmann about excision of the product of a ray with a manifold.
\end{abstract}


\section{Introduction}
\label{sec:intro}

A few years ago, Alan Weinstein circulated the following question, which appeared in his paper with Christian Blohmann \cite[Question~11.2]{MR4727523}:
\begin{quotation}
  Let $(M, \omega)$ be a noncompact symplectic manifold, and let $[0, \infty) \cong R \subset M$ be a properly embedded ray.
  Is $M \setminus R$ symplectomorphic to $M$?
  Can the symplectomorphism be chosen to be the identity outside a prescribed neighbourhood $U$ of $R$?
\end{quotation}

Leonid Polterovich\footnote{In private communication} noted that a symplectic excision of a ray $[0, \infty) \times \Set{0}^{2n-1}$ from $\R^{2n}$ arises from the symplectization of a contactomorphism from the punctured standard unit sphere to the standard Euclidean space as a contact version of the stereographic projection constructed in \cite[Proposition~2.13]{MR2194671}.
However, this symplectomorphism is non-trivial everywhere outside the opposite ray; symplectization cannot yield a symplectomorphism that is supported in an arbitrarily small neighbourhood of the ray.

Xiudi Tang \cite{MR4216706} gave, for each $\eps > 0$, an explicit symplectic excision of the ray $[0, \infty) \times \Set{0}^{2n-1}$ from $\R^{2n}$ that is the identity map outside a neighbourhood of the ray that depends on $\eps$.
These neighbourhoods get smaller when $\eps$ gets smaller, and their intersection is the ray itself, but their volumes are infinite.
This result cannot be used to excise a ray from a non-compact symplectic manifold whose volume is finite.

During our preparation of this paper, Stratmann released his paper \cite{MR4537129}.
Stratmann excises what he calls ``parametrized rays'': closed submanifolds-with-boundary of the form $[0, \infty) \times \Sigma$ on which the two-form is the pullback of a two-form on $\Sigma$.
Stratmann achieves this result through a limit of a sequence of time-dependent Hamiltonian flows that coincide on larger and larger sets.
We achieve the same result through a single time-independent Hamiltonian flow.
Moreover, we use time-independent flows to excise more general closed subsets of non-compact symplectic manifolds.%
\footnote{\label{fn:topology}If $M$ is a manifold, $Z \subset M$ is a non-empty subset, and $M \setminus Z$ is homeomorphic to $M$, then $Z$ is closed in $M$ and $M$ is non-compact.}
For example, we excise the ``Cantor brush'' $\Set{0}^{2n-3} \times C \times [0, \infty) \times \Set{0}$, where $n \geq 2$ and $C \subset [0, 1]$ is the Cantor set (see \cref{exm:Cantor-brush}), or the ``box with a tail'' $[-1, 1]^{2n-2} \times [0, \infty) \times \Set{0} \cup \Set{0}^{2n-2} \times [-1, \infty) \times \Set{0}$ (see \cref{exm:box-w-tail}).
Using iterations of such flows, we excise even more general subsets, for example, a ``ray with two horns'' (see \cref{cor:ray-two-horns} and \cref{rem:ray-two-horns-auto}).

Our method is a novel variant of the symplectic isotopy extension theorem.
We assume that there is a submanifold $(N, \omega_N)$ of $(M, \omega)$ that contains the closed subset $Z$ and a null vector field on $(N, \omega_N)$ whose time-1 flow sends all the points of $Z$ to infinity and sends $N \setminus Z$ onto $N$, and we extend it to a Hamiltonian flow on $M$ whose time-$1$ flow sends $M \setminus Z$ onto $M$.
The subtle point is to ensure that the points of $M \setminus Z$ do not go to infinity in time $\leq 1$.

Following some preliminaries in \cref{sec:prelim}, we start with the simplest non-trivial case, of excising a ray from a symplectic manifold, in \cref{sec:ray};
see \cref{cor:remove-ray}.
This implies that any exact symplectic form has a nowhere vanishing primitive, as noted by Blohmann and Weinstein and proved differently by Stratmann; see \cref{rem:Blohmann-Weinstein} and \cref{thm:primitive}.
In \cref{sec:Hamiltonian-is-local} we prove locality of Hamiltonian excision: for a closed subset $Z$ of a symplectic manifold $M$, whether $Z$ is excisable by a time-$1$ Hamiltonian flow of a function that vanishes on $Z$ depends only on a neighbourhood of $Z$ in $M$; see \cref{thm:locality-excision}.
In \cref{sec:flows} we develop more subtle properties of flows on manifolds, including an interesting variant of the escape lemma (\cref{lem:escape-3}), leading to a technical lemma (\cref{lem:slow-zone}) that plays a crucial role in the following section.
In \cref{sec:smooth-to-hamiltonian} we show how to extend an excision by a null vector field on a submanifold into a Hamiltonian excision on the ambient symplectic manifold; see \cref{thm:extension-vector-field-germ}.
Combining this with the locality of Hamiltonian excision, we recover Stratmann's theorem; see \cref{thm:stratmann}.
In \cref{sec:smooth-epigraph-excision}, we construct smooth excisions by time-$1$ maps of vector fields.
Specifically, we consider subsets of $\Sigma \times (0, 1)$ of the form $Z := \Set{(p, x) \mmid x \geq \lambda(p)}$ for a function $\lambda \colon \Sigma \to (0, 1]$, where $\Sigma$ is any manifold.
It is not difficult to show that a necessary condition for excising $Z$ from $\Sigma \times (0, 1)$ is that the function $\lambda$ be lower semi-continuous; see \cref{lem:Z-closed}.
Our main result of this section is that lower semi-continuity of $\lambda$ is also sufficient; see \cref{thm:epigraph-smooth-excision}.
We first give a special case of this result that is easier to prove and that suffices for many applications; see \cref{lem:epigraph-smooth-forward-time}.
We then prove \cref{thm:epigraph-smooth-excision} through an iterative construction of the vector fields that excise the epigraphs of an increasing sequence of smooth functions that approximates the lower semi-continuous function.
Finally, in \cref{sec:example} we give some of the examples that we mentioned earlier.

Throughout this paper, ``manifold'' means ``smooth manifold'' and ``embedding'' means ``smooth embedding''.

\subsection*{Acknowledgement}

We are grateful to Alan Weinstein, whose question about excising a ray prompted this project. Xiudi Tang thanks Reyer Sjamaar for helpful discussions and thanks Ning Jiang for the warm hospitality at Wuhan University, where some ideas in this paper were sparked.
X.\ Tang thanks the Beijing Institute of Technology for a Research Fund Program for Young Scholars and National Natural Science Foundation of China for Young Scientist Fund 1220011806.
This research was partly funded by Natural Sciences and Engineering Research Council of Canada Discovery Grants RGPIN-2018-05771 and 485904.
Y.\ Karshon's research is partly funded by the United States -- Israel Binational Science Foundation.

\section{Preliminaries}
\label{sec:prelim}

\subsection*{Boundedness}
An \emph{exhaustion function} on a manifold $M$ is a continuous function $\zeta \colon M \to \R$ that is proper and bounded from below.
Such a function always exists (see \cite{MR2680546}).
A subset $B$ of $M$ is precompact (namely, its closure in $M$ is compact) if and only if its image $\zeta(B)$ is bounded in $\R$ for one, hence any, exhaustion function $\zeta$.

\subsection*{Semi-continuity}

$[-\infty, \infty]$ is the ordered set that is obtained from $\R$ by adding a maximal element $\infty$ and a minimal element $-\infty$.
For $u, v \in [-\infty, \infty]$ with $u \leq v$, we denote $[u, v] := \Set{x \mmid u \leq x \leq v}$.
A subset $J$ of $[-\infty, \infty]$ is \emph{convex} if for any $u, v \in [-\infty, \infty]$ with $u \leq v$, if $u, v \in J$, then $[u, v] \subseteq J$.
A map $f \colon N \to J$ from a topological space $N$ to a convex subset $J$ of $[-\infty, \infty]$ is \emph{lower semi-continuous} if for each $y \in J$ the set $\Set{p \in N \mmid f(p) > y}$ is open in $N$, and it is \emph{upper semi-continuous} if for each $y \in J$ the set $\Set{p \in N \mmid f(p) < y}$ is open in $N$.

\begin{lemma} \label{lem:semicontinuous-to-interval}
  Let $f \colon N \to J$ be a map from a topological space $N$ to a convex subset $J$ of $[-\infty, \infty]$.
  Then $f$ is lower semi-continuous as a map to $J$ if and only if it is lower semi-continuous as a map to $[-\infty, \infty]$.
  \ynote{\quad We use \cref{lem:semicontinuous-to-interval} in the proof of \cref{cor:D-open} with $(0, \infty]$ and $(-\infty, 0]$, and in~\cref{fn:to-J} with $(0, 1]$.}
\end{lemma}

\begin{proof}
  If the set $\Set{p \in N \mmid f(p) > y}$ is open for all $y \in [-\infty, \infty]$, then in particular it's open for all $y \in J$.
  For the other direction note that, because $J$ is convex, for $y$ outside~$J$, the set $\Set{p \in N \mmid f(p) > y}$ is either empty or all of $N$, so it's automatically open.
\end{proof}

Recall that the \emph{order topology} on an ordered set is the topology generated by the sub-basis consisting of the sets $\Set{x \mmid x > a}$ and $\Set{x \mmid x < b}$ for $a, b$ in the ordered set.
The order topology on $\R$ coincides with its standard topology, as well as with its relative topology that is induced from the order topology on $[-\infty, \infty]$.

\begin{lemma} \label{lem:semi-cont}
  Let $f \colon N \to J$ be a map from a topological space $N$ to a convex subset $J$ of $[-\infty, \infty]$.
  Then $f$ is lower semi-continuous if and only if the set
  \begin{equation} \label{eq:lower-level}
    \Set{(p, x) \in N \times J \mmid x < f(p)}
  \end{equation}
  is open in $N \times J$, and $f$ is upper semi-continuous if and only if the set
  \begin{equation*}
    \Set{(p, x) \in N \times J \mmid x > f(p)}
  \end{equation*}
  is open in $N \times J$.
  \ynote{\\ We use \cref{lem:semi-cont} in \cref{cor:D-open} with $J = [-\infty, \infty]$ and in \cref{lem:Z-closed} with $J = (0, 1]$.}
\end{lemma}

\begin{proof}
  We prove the first claim; the second claim is similar.
  Suppose that the set~\cref{eq:lower-level} is open in $N \times J$.
  Then for each $y \in J$, the preimage of the set~\cref{eq:lower-level} under the continuous map $p \mapsto (p, y)$ is open in $N$.
  This preimage is the set $\Set{p \in N \mmid f(p) > y)}$.
  Because $y$ is arbitrary, $f$ is lower semi-continuous.
  Conversely, suppose that $f$ is lower semi-continuous.
  Let $(p, x)$ be in the set~\cref{eq:lower-level}.
  Let $y \in J$ be such that $x < y < f(p)$.
  Then $p$ is in the subset $U := \Set{p' \in N \mmid y < f(p')}$ of $N$, which is open because $f$ is lower semi-continuous, and $x$ is in the subset $V := \Set{x' \in J \mmid x' < y}$ of $J$, which is open by the definition of the order topology.
  The product $U \times V$ is then an open subset of $N \times J$ that contains $(p, x)$ and is contained in the set~\cref{eq:lower-level}.
  Because $(p, x)$ is arbitrary, the set~\cref{eq:lower-level} is open.
\end{proof}

\begin{corollary} \label{cor:D-open}
  Let $N$ be a topological space, and let $D \subset N \times \R$ be a subset of the form
  \begin{equation*}
    D = \Set{(p, x) \mmid S(p) < x < T(p)}
  \end{equation*}
  for functions
  \begin{equation*}
    S \colon N \to [-\infty, 0) \quad \stext{and} \quad T \colon N \to (0, \infty].
  \end{equation*}
  Then $D$ is open in $N \times \R$ if and only if $S$ is upper semi-continuous and $T$ is lower semi-continuous.
  \ynote{\quad We refer to \cref{cor:D-open} in text in the subsection ``Flows''.}
\end{corollary}

\begin{proof}
  Because $\R$ is open in $[-\infty, \infty]$, $D$ is open in $N \times \R$ if and only if it is open in $N \times [-\infty, \infty]$.
  This holds if and only if its complement in $N \times [-\infty, \infty]$ is closed in $N \times [-\infty, \infty]$.
  This, in turn, holds if and only if the intersections of this complement with the closed subsets $N \times [-\infty, 0]$ and $N \times [0, \infty]$ are closed in $N \times [-\infty, \infty]$.
  These intersections are
  \begin{equation*}
    \Set{(p, x) \mmid x \leq S(p)} \quad \stext{and} \quad \Set{(p, x) \mmid x \geq T(p)}.
  \end{equation*}
  By \cref{lem:semi-cont}, the first of these sets is closed if and only if the function $S$ is upper semi-continuous to $[-\infty, \infty]$ and the second of these sets is closed if and only if the function $T$ is lower semi-continuous to $[-\infty, \infty]$.
  By \cref{lem:semicontinuous-to-interval}, these conditions hold iff $S$ is upper semi-continuous to $[-\infty, 0)$ and $T$ is lower semi-continuous to $(0, \infty]$.
\end{proof}

\subsection*{Flows}

An \emph{interval} is a convex subset of $\R$.
It is \emph{nondegenerate} if it is not empty nor a singleton.
Fix a vector field $Y$ on a manifold $N$.
A \emph{trajectory} of $Y$ is a differentiable map $\gamma \colon I \to N$ from an interval $I$ to $N$ such that if $I$ is non-degenerate then $\frac{\der\gamma}{\der t} = \Res{Y}_{\gamma(t)}$ for all $t \in I$.
The \emph{flow domain} of $Y$ is the set
\begin{multline*}
  D_Y := \{(x, t) \in N \times \R \mid \! \stext{there exists a trajectory $\gamma \colon I \to N$ of $Y$} \\
  \stext{whose domain $I$ contains $0$ and $t$ and such that $\gamma(0)=x$}\!\}.
\end{multline*}
This set has the form
\begin{equation*}
  D_Y = \Set{(x, t) \in N \times \R \mmid S_Y(x) < t < T_Y(x)}
\end{equation*}
for functions
\begin{equation*}
  T_Y \colon N \to (0, \infty] \quad \stext{and} \quad S_Y \colon N \to [-\infty, 0)
\end{equation*}
that are, respectively, lower semi-continuous and upper semi-continuous.
We call these functions, respectively, the \emph{forward exit time} and the \emph{backward exit time}.
Their semi-continuity properties are equivalent to $D_Y$ being open in $N \times \R$; see \cref{cor:D-open}.
There exists a (necessarily unique) smooth map
\begin{equation*}
  \Phi_Y \colon D_Y \to N,
\end{equation*}
called the \emph{maximal flow} of $Y$, such that the following holds.
For each $x \in N$, the curve
\begin{equation} \label{eq:max-traj}
  \gamma := \Phi_Y(x, \cdot) \colon (S_Y(x), T_Y(x)) \to N
\end{equation}
is a trajectory of $Y$ with initial condition $\gamma(0) = x$,
and for each trajectory
$\gamma \colon I \to N$ of $Y$ with $0 \in I$ and $\gamma(0) = x$, we have $I \subset (S_Y(x), T_Y(x))$ and $\gamma(\cdot) = \Phi_Y(x, \cdot)$.

For each $t \in \R$, we have the following diffeomorphism between open subsets of $N$, with inverse $\Phi_Y(\cdot, -t)$:
\begin{equation} \label{eq:Phi-t}
  \Phi_Y(\cdot, t) \colon \Set{x \in N \mmid S_Y(x) < t < T_Y(x)} \xrightarrow{\cong} \Set{x \in N \mmid S_Y(x) < -t < T_Y(x)}.
\end{equation}
For all $(z, t) \in D_Y$, we have
\begin{equation} \label{eq:exit-time-shift}
  T_Y(\Phi_Y(z, t)) = T_Y(z) - t \quad \stext{and} \quad S_Y(\Phi_Y(z, t)) = S_Y(z) - t.
\end{equation}

The map~\cref{eq:max-traj} is called the \emph{maximal trajectory of $Y$ that starts at $x$}.
The maps
\begin{equation*}
  \Res{\Phi_Y(x, \cdot)}_{[0, T_Y(x))} \colon [0, T_Y(x)) \to N \qquad \stext{and} \qquad \Res{\Phi_Y(x, \cdot)}_{(S_Y(x), 0]} \colon (S_Y(x), 0] \to N,
\end{equation*}
are called the \emph{maximal forward trajectory of $Y$ that starts at $x$} and the \emph{maximal backward trajectory of $Y$ that ends at $x$}.
For each $t \in \R$, the map~\cref{eq:Phi-t} is called the \emph{time-$t$ flow} of~$Y$.

We say that the time-$1$ flow of $Y$ \emph{excises} a subset $Z$ from $N$ if this time-$1$ flow is a diffeomorphism from $N \setminus Z$ to $N$.

\begin{lemma} \label{lem:time-one-excise}
  The time-$1$ flow of $Y$ excises $Z$ from $N$ if and only if
  \begin{equation*}
    Z = \Set{x \in N \mmid T_Y(x) \leq 1} \qquad \stext{and} \qquad S_Y(x) < -1 \text{ for all } x \in M.
  \end{equation*}
  If this holds, then the maximal forward trajectories that start in $Z$ stay in $Z$, and $S_Y(x) = -\infty$ for all $x \in M$.
  \ynote{ \\ We refer to \cref{lem:time-one-excise} in the proofs of \cref{remove-ray-standard,lem:cutoff-smooth,prop:ham-imply-nbhd,thm:locality-excision,thm:extension-vector-field-germ,thm:epigraph-smooth-excision,prop:exc-epigraph}.}
\end{lemma}

\begin{proof}
  The first claim follows from~\cref{eq:Phi-t}.
  The second claim then follows from the first part of~\cref{eq:exit-time-shift} with $t \geq 0$.
  For the third claim, rewrite the second part of~\cref{eq:exit-time-shift} with $t = -1$ as $S_Y(z) = S_Y(\Phi_Y(z, -1)) - 1$ to prove by induction that $S_Y < -n$ for all $n \in \N$.
\end{proof}

\begin{lemma}[Escape lemma, Version 1] \label{lem:escape-1}
  For any $x \in N$, if the maximal forward trajectory of $Y$ that starts at $x$ is contained in a compact subset of $N$, then it is defined for all positive times, and if the maximal backward trajectory of $Y$ that ends at $x$
  is contained in a compact subset of $N$, then it is defined for all negative times.
  \ynote{ \\ We refer to Lemma \ref{lem:escape-1} in the proofs of \cref{lem:flow-interval,lem:criterion-max-traj,lem:defined-all-times-nbhd,lem:escape-3}, and in text at the beginning of \cref{sec:flows}.}
\end{lemma}

\begin{proof}
  See \cite[Lemma~9.19]{MR2954043}.
\end{proof}

The \emph{Hamiltonian vector field} $X_F$ of a smooth function $F \colon M \to \R$ on a symplectic manifold $(M, \omega)$ is defined by $X_F \intprod \omega = \der F$; the \emph{Hamiltonian flow} of $F$ is the flow of~$X_F$.
This flow preserves $\omega$ and $F$.

\section{Excising a ray}
\label{sec:ray}

\subsection{Excising a ray from \texorpdfstring{$\R^{2n}$}{Rn}}
\label{ssec:ray-cartesian}

Consider $\R^{2n}$ with coordinates $(x_1, y_1, \ldots, x_n, y_n)$, with the standard symplectic form $\omegacan \coloneqq \der x_1 \wedge \der y_1 + \ldots + \der x_n \wedge \der y_n$, and, in it, consider the ray
\begin{equation*}
  R_0 \coloneqq \Set{0}^{2n-2} \times [0, \infty) \times \Set{0}.
\end{equation*}
\ynote{We refer to \cref{remove-ray-standard} in text above \cref{cor:remove-ray} and in the proof of \cref{cor:remove-ray}.}

\begin{proposition} \label{remove-ray-standard}
  For every neighbourhood of the ray $R_0$ in $\R^{2n}$, there exists a smooth function $\R^{2n} \to \R$ that is supported in the given neighbourhood, that vanishes on $R_0$, and whose time-$1$ Hamiltonian flow excises $R_0$ from $(\R^{2n}, \omegacan)$.
\end{proposition}

\begin{proof}
  Because there is a symplectomorphism $(-1, 1) \times \R \to \R^2$ that takes $[0, 1) \times \Set{0}$ to $[0, \infty) \times \Set{0}$, (for example, take the cotangent lift of the diffeomorphism $t \mapsto t/(1 - t^2)$ from $(-1, 1)$ to $\R$), it is enough to find, for
  \begin{equation*}
    M \coloneqq \R^{2n-2} \times (-1, 1) \times \R \quad \stext{and} \quad R_1 \coloneqq \Set{0}^{2n-2} \times [0, 1) \times \Set{0},
  \end{equation*}
  a smooth function $F \colon M \to \R$ that is supported in a given neighbourhood of $R_1$ in $M$, that vanishes on $R_1$, and whose time-$1$ Hamiltonian flow excises $R_1$ from $M$.%

  We write points of $M$ as
  \begin{equation*}
    z = (p; x_n, y_n) \quad \stext{with} \quad p = (x_1, y_1, \dotsc, x_{n-1}, y_{n-1}).
  \end{equation*}
  When $n = 1$ we use the same notation, with $p = 0$.

  Let $\eps \in (0, 1)$, and let
  \begin{equation*}
    h \colon [-\eps, 1) \to (0, \infty)
  \end{equation*}
  be a smooth strictly decreasing function that converges to $0$ at $1$.
  Let
  \begin{equation*}
    U \coloneqq \Set{(p; x_n, y_n) \mmid x_n \in (-\eps, 1) \stext{and} \abs{p}^2 + y_n^2 < h(x_n)}.
  \end{equation*}
  Then $U$ is an open neighbourhood of $R_1$ in $M$, and $\abs{p}^2 + y_n^2 < h(-\eps)$ on $U$.

  Fix a smooth function
  \begin{equation*}
    \chi \colon M \to [0, 1]
  \end{equation*}
  that is supported on the intersection of $U$ with the given neighbourhood of $R_1$ and is equal to $1$ on some smaller neighbourhood of $R_1$.
  Assume that $\der \chi = 0$ wherever $\chi = 0$; this can be achieved, for instance, by replacing $\chi$ by $\chi^2$.

  Let
  \begin{align*}
    F(z) \coloneqq \frac{1 - x_n^2}{\abs{p}^2 + 1 - x_n^2} \chi(z) y_n.
  \end{align*}
  Fix $c > 0$.
  Because the function $F$ is the product of $y_n$ with a function that takes values in $[0, 1]$ and is supported in $U$, if $\abs{F(z)} \geq c > 0$, then $z \in U$ and ${\abs{y_n} \geq c}$.
  From the definition of $U$, this further implies that $x_n \in [-\eps, 1)$ and $h(x_n) \geq c^2$.
  Because $h(x) \to 0$ as $x \to 1$, these inequalities imply that $x_n \in [-\eps, b]$ for some $b \in [-\eps, 1)$.
  Hence, the set $\Set{\abs{F} \geq c}$ is compact, as it is closed in~$M$ and contained in $\Set{x_n \in [-\eps, b], \ \abs{p}^2 + y_n^2 \leq h(-\eps)}$.
  Varying $c$, we conclude that
  \begin{equation*}
    \Res{F}_{M \setminus F^{-1}(0)} \colon M \setminus F^{-1}(0) \to \R \setminus \Set{0}
  \end{equation*}
  is a proper map.

  Let $X_F$ be the Hamiltonian vector field of $F$, and let $S_{X_F}$ and $T_{X_F}$ be, respectively, the backward exit time and the forward exit time of the flow of $X_F$.
  We claim that
  \begin{itemize}
    \item $T_{X_F} > 1$ on $M \setminus R_1$;
    \item $T_{X_F} \leq 1$ on $R_1$;
    \item $S_{X_F} < -1$ everywhere on $M$.
  \end{itemize}

  \noindent\emph{On $\Set{y_n \neq 0}$}:
  Let $z = (p; x_n, y_n)$ with $y_n \neq 0$.
  If $F(z) = 0$, then $\chi(z) = 0$; by the choice of $\chi$, also $\Res{\der \chi}_z = 0$; so $X_F(z) = 0$, and so the maximal trajectory of $X_F$ starting at $z$ is constant.
  If $F(z) \neq 0$, then the maximal trajectory of $X_F$ starting at $z$ contained in a level set of the proper map $\Res{F}_{M \setminus F^{-1}(0)} \colon M \setminus F^{-1}(0) \to \R \setminus \Set{0}$.
  In either case, the maximal trajectory of $X_F$ starting at $z$ is defined for all times, so $S_{X_F}(z) = -\infty$ and $T_{X_F}(z) = \infty$.

  \noindent\emph{On $\Set{y_n = 0}$:}
  At each point $z=(p; x_n, 0)$, we have
  \begin{align*}
    X_F(p; x_n, 0) = \frac{1 - x_n^2}{\abs{p}^2 + 1 - x_n^2} \chi(p; x_n, 0) \del_{x_n}.
  \end{align*}
  Since $X_F(p; x_n, 0)$ is a non-negative multiple of $\del_{x_n}$ and vanishes for $x_n < -\eps$, we have $S_{X_F}(p; x_n, 0) = -\infty$.
  It remains to calculate $T_{X_F}(p; x_n, 0)$.

  \noindent\emph{On $\Set{y_n = 0} \cap \Set{p \neq 0}$:}
  At each point $z=(p; x_n, 0)$ with $p \neq 0$, we have $\abs{p}^2 > 0$.
  By the comparison
  \begin{equation*}
    \frac{1 - x_n^2}{\abs{p}^2 + 1 - x_n^2} \chi(p; x_n, 0)
    \leq \frac{1 - x_n^2}{\abs{p}^2 + 1 - x_n^2}
  \end{equation*}
  and the completeness of the vector field $\frac{1 - x^2}{b + 1 - x^2} \del_{x}$ on $(-1, 1)$ for $b > 0$, we have $T_{X_F}(p; x_n, 0) = \infty$.

  \noindent\emph{On $\Set{y_n = 0} \cap \Set{p = 0}$:}
  At each point $z=(0; x_n, 0)$, we have
  \begin{equation*}
    X_F(z) = \chi(0; x_n, 0) \del_{x_n}.
  \end{equation*}
  When $0 \leq x_n < 1$, we have $X_F(0; x_n, 0) = \del_{x_n}$, so $T_{X_F}(0; x_n, 0) = 1-x_n$.
  Because $X_F(0; x_n, 0)$ is a positive multiple of~$\del_{x_n}$ near $x_n = 0$ and is a non-negative multiple of~$\del_{x_n}$ everywhere, the function $x_n \mapsto T_{X_F}(0; x_n, 0)$ is strictly decreasing near $x_n = 0$ and is weakly decreasing everywhere on $(-1, 1)$.
  We conclude that, for all such~$z$, we have $T_{X_F}(z) \leq 1$ if and only if $x_n \geq 0$.

  We have now shown that $T_{X_F}(z) \leq 1$ if and only if $z \in R_1$, and that $S_{X_F}(z) = -\infty$ for all $z \in M$.
  This proves our claim.
  The theorem then follows by \cref{lem:time-one-excise}.
\end{proof}

\subsection{Excising a ray from a symplectic manifold}

We will use the following topological lemma:

\begin{lemma} \label{lem:closed-nbhd}
  Let $g \colon M_1 \to M_2$ be a continuous map of manifolds, let $Z$ be a closed subset of $M_1$, and suppose that the restriction $g|_Z \colon Z \to M_2$ is proper.
  Then there exists a closed neighbourhood $C$ of $Z$ in $M_1$ such that the restriction $g|_{C} \colon C \to M_2$ is proper.
  \ynote{ \\ We use \cref{lem:closed-nbhd} in the proof of \cref{cor:remove-ray}.}
\end{lemma}

\begin{proof}
  Let $\zeta_1 \colon M_1 \to [0, \infty)$ and $\zeta_2 \colon M_2 \to [0, \infty)$ be exhaustion functions on $M_1$ and on $M_2$, respectively.
  Let
  \begin{multline*}
    C := \{y \in M_1 \mid \!\stext{there exists} x \in Z \stext{such that} \\
    \abs{\zeta_1(y) - \zeta_1(x)} \leq 1 \stext{and} \abs{\zeta_2(g(y)) - \zeta_2(g(x))} \leq 1\}.
  \end{multline*}
  For each $x \in Z$, the set
  \begin{equation*}
    \Set{y \in M_1 \mmid \abs{\zeta_1(y) - \zeta_1(x)} < 1 \stext{and} \abs{\zeta_2(g(y)) - \zeta_2(g(x))} < 1}
  \end{equation*}
  is an open subset of $M_1$ that contains $x$ and is contained in $C$.
  So $C$ is a neighbourhood of $Z$ in $N$.

  We will show that $C$ is closed in $M_1$.
  Let $(y_n)$ be a sequence in $C$ that converges in $M_1$ to a limit $y_\infty$.
  By the definition of~$C$, we can find a sequence $(x_n)$ in $Z$ such that, for each $n$,
  \begin{equation*}
    \abs{\zeta_1(y_n) - \zeta_1(x_n)} \leq 1 \quad \stext{and} \quad \abs{\zeta_2(g(y_n)) - \zeta_2(g(x_n))} \leq 1.
  \end{equation*}
  By continuity, $\zeta_1(y_n) \to \zeta_1(y_\infty)$.
  So $\Set{\zeta_1(y_n)}$ is bounded.
  This, in turn, implies that $\Set{\zeta_1(x_n)}$ is bounded.
  After passing to a subsequence, we may assume that $(x_n)$ converges to some $x_\infty \in M_1$.
  Because $Z$ is closed in $M$, we have $x_\infty \in Z$.
  By continuity,
  \begin{equation*}
    \abs{\zeta_1(y_\infty) - \zeta_1(x_\infty)} \leq 1 \quad \stext{and} \quad \abs{\zeta_2(g(y_\infty)) - \zeta_2(g(x_\infty))} \leq 1.
  \end{equation*}
  So $y_\infty$ is in $C$.

  To show that $\Res{g}_C \colon C \to M_2$ is proper, it now suffices to show that for every $b > 0$ there exists $B > 0$ such that for every $y \in C$, if $\zeta_2(g(y)) \leq b$, then $\zeta_1(y) \leq B$.
  So let $b > 0$.
  Because $g|_Z \colon Z \to M_2$ is proper, the set $K := \Set{x \in Z \mmid \zeta_2(g(z)) \leq 1 + b}$ is compact.
  We take $B := \sup \Set{1 + \zeta_1(x) \mmid x \in K}$.
  Now, let $y \in C$, and suppose that $\zeta_2(g(y)) \leq b$.
  By the definition of $C$, we can choose $x \in Z$ such that $\abs{\zeta_1(y) - \zeta_1(x)} \leq 1$ and $\abs{\zeta_2(g(y)) - \zeta_2(g(x))} \leq 1$.
  Then $\zeta_2(g(x)) \leq 1 + \zeta_2(g(y)) \leq 1 + b$, so $x \in K$, and $\zeta_1(y) \leq 1 + \zeta_1(x) \leq B$.
\end{proof}

The following corollary of \cref{remove-ray-standard} answers Weinstein and Blohmann's question about excising rays from arbitrary symplectic manifolds:

\begin{theorem} \label{cor:remove-ray}
  Let $(M, \omega)$ be a symplectic manifold, and let $R$ be a properly embedded ray in $M$.
  Then for every neighbourhood $U_R$ of $R$ in $M$ there exists a smooth function $M \to \R$ that is supported in $U_R$, that vanishes on $R$, and whose time-$1$ Hamiltonian flow excises $R$ from $M$.
  \ynote{ \\ We use \cref{cor:remove-ray} in \cref{rem:closed} and in the proofs of \cref{cor:ray-two-horns,thm:primitive}.}
\end{theorem}

\begin{proof}
  The Weinstein symplectic tubular neighbourhood theorem implies (see below) that there exist an open neighbourhood $U_0$ of the standard ray $R_0$ in $\R^{2n}$ and an open neighbourhood $U$ of $R$ in $M$ and a symplectomorphism $\psi \colon (U_0, \omegacan) \to (U, \omega)$ that takes $R_0$ to $R$.
  After replacing $U_0$ by its intersection with $\psi^{-1}(U_R)$, we may assume that $U \subset U_R$.
  By \cref{lem:closed-nbhd}, there exists a closed neighbourhood $C$ of $R_0$ in $U_0$ such that the restriction $\psi|_C \colon C \to M$ is proper.
  In particular, $\psi(C)$ is closed in $M$.
  By \cref{remove-ray-standard}, there exists a smooth function $F_0 \colon \R^{2n} \to \R$ whose support is contained in $C$, that vanishes on $R_0$, and whose time-$1$ Hamiltonian flow excises $R_0$ from $(\R^{2n}, \omegacan)$.
  The function $F \colon M \to \R$ that is defined by $F_0 \circ \psi^{-1}$ on $U$ and zero outside $U$ is then smooth (because it is smooth on the open sets $U$ and $M \setminus \psi(C)$, which cover $M$).
  The support of $F$ is contained in the given neighbourhood $U_R$ of $R$, and the time-$1$ Hamiltonian flow of $F$ is a symplectomorphism of $M \setminus R$ with~$M$.

  (The Weinstein symplectic tubular neighbourhood theorem is usually stated for submanifolds, not submanifolds-with-boundary.
  To apply it to the ray $R$ as above, we extend a parametrization $\gamma \colon [0, \infty) \to R$ of $R$ to an embedding $\gamma \colon (-\delta, \infty) \to M$ for some $\delta > 0$.
  We recall how to do this.
  By the definition of a smooth map on a manifold-with-boundary, $\gamma$ extends to a smooth map $\gamma \colon (-\eps, \infty) \to M$ for some $\eps > 0$.
  Because $\dot{\gamma}(0) \neq 0$, after shrinking $\eps$ we may assume that $\Res{\gamma}_{(-\eps, \eps)}$ is an embedding.
  Because $\Res{\gamma}_{[0, \infty)}$ is proper and one-to-one, $\gamma([\eps, \infty))$ is closed and disjoint from $\gamma(0)$, so there exists $0 < \delta < \eps$ such that $\gamma([-\delta, 0])$ is disjoint from $\gamma([\eps, \infty))$.
  The map $\Res{\gamma}_{[-\delta, \infty)} \to M$ is then a one-to-one proper immersion, hence an embedding.)
\end{proof}

\begin{remark} \label{rem:closed}
  In the proof of Corollary~\ref{cor:remove-ray}, it is important that the image under $\psi$ of the support of $F_0$ be closed in $M$ and not only in $U$.
  In the first arXiv version of this paper, as well as in Tang's earlier paper \cite[Corollary~1.2]{MR4216706}, we mistakenly omitted this detail of the proof.
\eor
\end{remark}

\begin{corollary} \label{cor:ray-two-horns}
  Let $(M, \omega)$ be a symplectic manifold.
  Let $Z_0$ be a ``ray with two horns'' in $\R^2$, obtained as the union of the non-negative $x$-axis $\Set{(x, 0) \mmid x \geq 0}$ and the closed intervals $\Set{(-s, s) \mmid s \in [0, 1]}$ and $\Set{(-s, -s) \mmid s \in [0, 1]}$.
  Let $Z$ be the image of a proper embedding of $Z_0$ into $M$.
  Then there exists a symplectomorphism from $M \setminus Z$ onto $M$.
  \ynote{ \\ We use \cref{cor:ray-two-horns} in \cref{rem:ray-two-horns-auto}.}
\end{corollary}

\begin{proof}
  Let $Z_1$, $Z_2$, $Z_3$, respectively, be the images in $M$ of the non-negative $x$-axis, of the interval $\Set{(-s, s) \mmid s \in [0, 1]}$, and of the interval $\Set{(-s, -s) \mmid s \in [0, 1]}$.
  Applying Corollary \ref{cor:remove-ray} three times, we obtain symplectomorphisms
  \begin{equation*}
    M \setminus Z = (M \setminus (Z_1 \cup Z_2)) \setminus Z_3 \ \to \ M \setminus (Z_1 \cup Z_2) = (M \setminus Z_1) \setminus Z_2 \ \to \ M \setminus Z_1 \ \to \ M.
  \end{equation*}
\end{proof}

\begin{figure}[htb]
  \centering
  \includegraphics[width=\textwidth, height = 2cm]{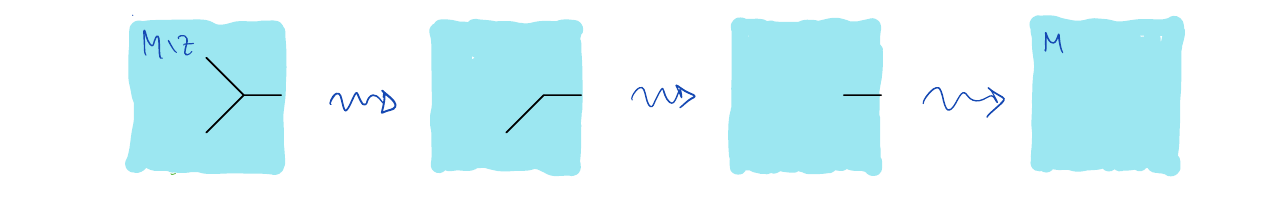}
  \caption{Removing a ray with two horns}
\end{figure}

\subsection{Nonvanishing primitive of exact symplectic forms}
\label{ssec:applications}

The original motivation for Weinstein and Blohmann's question was to show that every exact symplectic form has a nowhere vanishing primitive.
We show this in \cref{thm:primitive} below, after the preliminary \cref{lem:isolated-zero}.

\begin{lemma} \label{lem:isolated-zero}
  Any exact $2$-form has a primitive whose zero set is discrete.
  \ynote{ \\ We refer to \cref{lem:isolated-zero} in \cref{rem:Blohmann-Weinstein} and in the proof of \cref{thm:primitive}.}
\end{lemma}

\begin{proof}
  Let $\omega = \der \theta$ be an exact $2$-form on a manifold $M$.
  Let $\phi = (x_1, \dotsc, x_n) \colon M \to \R^n$ be an embedding.
  The smooth function
  \begin{equation*} \begin{split}
    F \colon M \times \R^n &\to T^*M, \\
    F(x, s) &= \theta(x) + \sum_{i = 1}^n s_i \der_x x_i,
  \end{split} \end{equation*}
  where $s = (s_1, \dotsc, s_n)$, is transverse to the zero section $0_{T^*M}$ of $T^*M$.
  By the Transversality Theorem (see, for instance, \cite{MR2680546}), we deduce that
  $F(\cdot, s) \colon M \to T^*M$ is transverse to $0_{T^*M}$ for almost every $s \in \R^n$.
  Fix such an $s$, and let $\rho := \sum_{i = 1}^n s_i x_i \colon M \to \R$.
  Then the zero set of the one-form  $\beta \coloneqq \theta + \der \rho = F(\cdot, s) \in \Omega^1(M)$ is a discrete set of points in~$M$.
  Because $\der \beta = \der \theta = \omega$, the one-form $\beta$ is a primitive of $\omega$.
\end{proof}

\begin{remark} \label{rem:Blohmann-Weinstein}
  Blohmann and Weinstein's paper \cite{MR4727523} contains a statement of \cref{lem:isolated-zero} without explanation and an idea for the proof of \cref{thm:primitive}.
  Our proof follows their ideas.
  Stratmann's paper \cite{MR4577600} contains a proof of \cref{thm:primitive} that does not rely on the excision of a ray.
\eor
\end{remark}

\begin{theorem} \label{thm:primitive}
  Any exact symplectic form has a nowhere vanishing primitive.
\end{theorem}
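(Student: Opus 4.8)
The plan is to combine \cref{lem:isolated-zero} with \cref{cor:remove-ray} by ``sweeping away'' the discrete zero set along a properly embedded ray. Let $\omega = \der\theta$ be an exact symplectic form on $M$. By \cref{lem:isolated-zero}, there is a primitive $\beta$ of $\omega$ whose zero set $P$ is discrete. If $P = \varnothing$ we are done, so assume otherwise; note that $P$ is countable and closed, and that $M$ is noncompact (a compact manifold admits no exact symplectic form). The idea is to find a single properly embedded ray $R = \gamma([0,\infty)) \subset M$ that contains all of $P$, then excise $R$.

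First I would construct the ray. Enumerate $P = \{p_1, p_2, \dotsc\}$ (finite or countably infinite). Since $M$ is a connected noncompact manifold, I can choose a proper embedding $\gamma\colon [0,\infty)\to M$ whose image is a properly embedded ray passing through all the $p_i$ in order: concretely, join $p_i$ to $p_{i+1}$ by an embedded arc, arrange the arcs to form an embedded ray (perturbing to remove self-intersections, using $\dim M \ge 2$), and push the tail of the ray off to infinity so that the embedding is proper. If $P$ is finite, one still has room to run the ray out to infinity after the last point. Some care is needed so that $\gamma$ is smooth and proper, but this is a standard differential-topology construction.

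Next I would apply \cref{cor:remove-ray} to this ray $R$: there is a symplectomorphism $\varphi\colon (M\setminus R,\ \omega|_{M\setminus R}) \to (M,\omega)$. Then $\alpha \coloneqq (\varphi^{-1})^*\bigl(\beta|_{M\setminus R}\bigr)$ is a one-form on $M$ with $\der\alpha = (\varphi^{-1})^*\omega = \omega$, so $\alpha$ is a primitive of $\omega$; and since $\beta$ is nowhere zero on $M\setminus R \supseteq M\setminus P$ -- indeed $\beta$ vanishes only on $P\subseteq R$ -- the pulled-back form $\alpha$ is nowhere vanishing on $M$. This completes the proof.

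The main obstacle is the ray construction: one must produce a \emph{single} properly embedded copy of $[0,\infty)$ whose image contains the entire discrete set $P$, which may be infinite and may accumulate only at infinity. The key points to check are that the chosen arcs can be made to assemble into a smooth embedded ray (no self-intersections, smooth at the junction points $p_i$) and that the resulting embedding is proper, i.e.\ the ray escapes every compact set. Once $R$ is in hand, the rest is a one-line pullback argument using \cref{cor:remove-ray}.
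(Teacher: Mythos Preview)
Your overall strategy---take a primitive $\beta$ with isolated zeros, push the zeros off to infinity along rays, and pull back---is exactly the paper's strategy. The gap is in the ray construction. You assert that one can always thread a \emph{single} properly embedded ray through an arbitrary closed discrete set $P\subset M$, calling this ``a standard differential-topology construction.'' It is not: when $M$ has more than one end, such a ray need not exist. Take $M=\R\times S^1$ (which carries the exact symplectic form $\der p\wedge\der\theta$) and $P=\Z\times\{1\}$. Any proper embedding $\gamma\colon[0,\infty)\to M$ eventually stays in one end, since the separating circle $\{0\}\times S^1$ is compact and a proper ray can meet it only finitely often; hence $\gamma$ misses all but finitely many points of $P$. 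Since \cref{lem:isolated-zero} gives you no control over \emph{which} discrete set arises as the zero locus of $\beta$, you cannot rule out this obstruction.

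The paper avoids the issue by abandoning the single ray. It builds an exhaustion $(K_j)$ with the property that every point of $K_{j+1}\setminus K_j$ can be joined within $M\setminus K_j$ to a point of $M\setminus K_{j+1}$, and then draws, for \emph{each} zero $z_i$, its own properly embedded ray $R_i$ starting at $z_i$ and escaping outward through successive shells $K_{j+1}\setminus K_j$. The rays are made pairwise disjoint and locally finite, so disjoint neighbourhoods $U_i$ exist; the Hamiltonians $F_i$ from \cref{cor:remove-ray} are supported in $U_i$ and summed to a single $F$, whose time-$1$ flow excises $R=\bigcup_i R_i$ at once. The rest---pulling $\beta$ back along the excision---is exactly your final paragraph. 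So the fix is not to refine your single ray but to replace it by a locally finite family of rays, one per zero.
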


\begin{proof}
  Let $(M, \omega)$ be an exact symplectic manifold.
  By \cref{lem:isolated-zero}, $\omega$ has a primitive $\beta$ whose zeroes are isolated.
  Choose such a $\beta$, and fix an enumeration $(z_i)_{i \geq 1}$ of its zeroes.

  We construct an exhaustion of $M$ by a sequence of compact subsets $(K_j)_{j = 1}^\infty$ such that each connected component of each $M \setminus K_j$ has non-compact closure in $M$.
  This can be achieved by taking the unions of regular sublevel sets of an exhaustion function for $M$ with the precompact connected components of their complements.
  It implies that for each $j$ any point in $K_{j+1} \setminus K_j$ can be joined by a smooth path in $M \setminus K_j$ to a point in $M \setminus K_{j+1}$.
  (A similar argument appeared in \cite{MR542888,MR4039813,MR3901809,MR4537129}.)

  We now construct for each~$i$ a properly embedded ray $R_i$ starting at $z_i$ such that the rays are pairwise disjoint and each point of $M$ has a neighbourhood that meets only finitely many of the rays.
  Our construction is recursive in $j$.
  For each $j$, and for each point in $K_{j+1} \setminus K_j$ that is either in $\{z_i\}$ or an endpoint of a previous path, we draw a smooth path in $M \setminus K_j$ that connects that point to a point in $M \setminus K_{j+1}$, such that all the new paths are disjoint from each other, from previous paths, and from all the points $z_i$.
  Moreover, we arrange the paths to have non-zero velocity, and whenever we extend an earlier path, we arrange that the concatenated path will be smooth.

  Let $(U_i)_{i \geq 1}$ be pairwise disjoint open neighbourhoods of $(R_i)_{i \geq 1}$, such that each point of $M$ has a neighbourhood that meets only finitely many of the sets $U_i$.
  By \cref{cor:remove-ray}, for each $i$, there is a smooth function $F_i \colon M \to \R$ supported in $U_i$ whose time-$1$ Hamiltonian flow, $\varphi_i$, excises $R_i$ from $M$.
  Let $R := \bigcup_{i \geq 1} R_i$, and let $U := \bigcup_{i \geq 1} U_i$.
  Then $F := \sum_{i \geq 1} F_i$ is well defined and smooth, it is supported in $U$, and its time-$1$ flow $\varphi$ is the composition of $(\varphi_i)_{i \geq 1}$.
  Let $\alpha := (\varphi^{-1})^* (\Res{\beta}_{M \setminus R})$.
  Then
  \begin{equation*}
    \der \alpha = \der ((\varphi^{-1})^* (\Res{\beta}_{M \setminus R})) = (\varphi^{-1})^* (\Res{\omega}_{M \setminus R}) = \omega,
  \end{equation*}
  so $\alpha$ is a primitive of $\omega$.
  Moreover, $\alpha$ has no zeroes, because $\beta$ has no zeroes in $M \setminus R$.
\end{proof}

\section{Hamiltonian excision is local}
\label{sec:Hamiltonian-is-local}

In this section we show that if a subset $Z$ of a symplectic manifold is excisable by a time-$1$ Hamiltonian flow of a function that vanishes on $Z$, then the function can be chosen to be supported on an arbitrarily small neighbourhood of $Z$.
See \cref{prop:ham-imply-nbhd}.
We conclude that, for a closed subset $Z$ of a symplectic manifold $M$, whether $Z$ is excisable by a time-$1$ Hamiltonian flow of a function that vanishes on $Z$ depends only on a neighbourhood of $Z$ in $M$.
See \cref{thm:locality-excision}.

The challenge is to multiply the Hamiltonian function by a cut-off function in such a way that the resulting time-$1$ Hamiltonian flow does not send to infinity any points outside of $Z$.
The analogous result for not-necessarily-Hamiltonian smooth flows is easier; see \cref{cor:cutoff-smooth}; it relies on the escape lemma for flows of vector fields.
We begin with some preparatory lemmas, starting with some qualitative properties of flows on intervals.

\begin{lemma} \label{lem:flow-interval}
  Let $I$ be a (possibly unbounded) open interval containing the origin $0$, let $v \colon I \to [0, 1]$ be a smooth function, and let $\tau \colon J \to I$ be the maximal trajectory of the vector field $v(t) \del / \del t$ on $I$, starting at the origin.
  \begin{itemize}[itemsep = 6pt, topsep = 6pt]
    \item If $\tau$ is not defined for all positive times, then it restricts to a diffeomorphism from $J^+ := J \cap [0, \infty)$ to $I^+ := I \cap [0, \infty)$.
      Similarly, if $\tau$ is not defined for all negative times, then it restricts to a diffeomorphism from $J^- := J \cap (-\infty, 0]$ to $I^- := I \cap (-\infty, 0]$.
    \item The domain $J$ of $\tau$ contains the interval $I$.
  \end{itemize}
  \ynote{We use both parts of \cref{lem:flow-interval} in the proof of \cref{lem:reparametrize-0-1}.}
\end{lemma}

\begin{proof}
  If $v(\tau(s_0)) = 0$ for some $s_0 \in J$, then $\tau$ must coincide with the constant curve with value $\tau(s_0)$ and domain $\R$, and we are done.
  Otherwise, $\frac{\der\tau}{\der s} = v(\tau(s)) > 0$ for all $s \in J$, so $\tau$ is strictly monotone increasing.
  By the intermediate value theorem and the inverse function theorem, the image $\tau(J)$ is an open subinterval of $I$ and $\tau \colon J \to \tau(J)$ is a diffeomorphism.
  Because $\tau(0) = 0$, $\tau(J^+)=\tau(J) \cap I^+$ and $\tau(J^-)=\tau(J) \cap I^-$.

  If $J^+$ is not bounded, then $J^+ = [0, \infty)$ automatically contains $I^+$.
  Suppose now that $J^+$ is bounded.
  By the escape lemma (\cref{lem:escape-1}), $\tau(J_+)$ does not have an upper bound in $I$.
  By the previous paragraph, $\tau|_{J^+} \colon J^+ \to I^+$ is then a diffeomorphism.
  Because $\der \tau/\der s \in [0, 1]$, we have $0 \leq \tau(s) \leq s$ for all $s \in J^+$, so $\tau(J^+) \subset J^+$.
  But $\tau(J^+)=I^+$, so $I^+ \subset J^+$.
  The arguments for the backward flow are similar.
\end{proof}

\begin{lemma} \label{lem:criterion-max-traj}
  Let $Y$ be a vector field on a manifold $N$, let $z_0 \in N$, and let $\sigma^+ \colon J^+ \to N$ be a forward trajectory of $Y$ starting at $z_0$.
  Then $\sigma^+$ is the maximal forward trajectory of $Y$ starting at $z_0$ if and only if either $J^+ = [0, \infty)$ or the image of $\sigma^+$ is not contained in any compact subset of $N$.
  \ynote{ \\ We use \cref{lem:criterion-max-traj} in the proofs of \cref{cor:invariant-submfd,cor:max-traj-iff,lem:reparametrize-0-1}.}
\end{lemma}

\begin{proof}
  Suppose that $\sigma^+$ is not maximal.
  Then $\sigma^+$ extends to a forward trajectory $\delta^+$ of~$Y$ whose domain is strictly larger than $J^+$.
  In particular, $J^+$ cannot be all of $[0, \infty)$, so it has the form $J^+=[0, b)$ for some $b > 0$ in $\R$, and the domain of $\delta^+$ contains $[0, b]$.
  Since $\sigma^+$ coincides with $\delta^+$ on $[0, b)$, its image is contained in the compact subset $\delta^+([0, b])$ of $N$.
  The converse direction follows from the escape lemma (\cref{lem:escape-1}).
\end{proof}

We have the following two corollaries of \cref{lem:criterion-max-traj}.

\begin{corollary} \label{cor:invariant-submfd}
  Let $Y$ be a vector field on a manifold $M$.
  Denote its support by $S$.
  Let $N$ be a submanifold such that $Y$ is tangent to $N$ at each point of $N$.
  Suppose that $N \cap S$ is closed in $M$.
  Then $N$ is invariant under the flow of $Y$.
  \ynote{ \\ We use \cref{cor:invariant-submfd} in the proofs of \cref{lem:extension-vector-field-germ,thm:extension-vector-field-germ}.}
\end{corollary}

  \begin{proof}
  We would like to show that, for each $z \in N$, the maximal trajectory of $Y|_N$ that starts at $z$ is maximal also as a trajectory of $Y$.

  On the set $M \setminus S$, we have $Y=0$.
  So $M \setminus S$ is fixed under the flow of $Y$, and $S$ is invariant under the flow of $Y$.

  Fix $z \in N \cap S$.
  If the maximal forward trajectory of $Y|_N$ in $N$ that starts at $z$ is defined for all positive times, then as in \cref{lem:criterion-max-traj}, it is maximal also as a forward trajectory of $Y$ in $M$.
  Suppose now that it is not defined for all positive times.
  By \cref{lem:criterion-max-traj} for $Y|_N$, its image is not contained in any compact subset of $N$.
  But its image is contained in $N \cap S$, which is closed in $M$.
  So its image is not contained in any compact subset of $M$.
  By \cref{lem:criterion-max-traj} for $Y$, it is maximal also as a forward trajectory of $Y$ in $M$.

  A similar argument holds for the backward trajectories.
\end{proof}

  \begin{corollary} \label{cor:max-traj-iff}
  Let $Y_1$ and $Y_2$ be vector fields on a manifold $N$.
  Then every maximal forward trajectory for~$Y_1$ along which $Y_1$ and $Y_2$ coincide is also a maximal forward trajectory for $Y_2$, and every maximal backward trajectory for~$Y_1$ along which $Y_1$ and $Y_2$ coincide is also a maximal backward trajectory for $Y_2$.
  \ynote{\\ We use \cref{cor:max-traj-iff} in the proofs of \cref{lem:cutoff-smooth} (forward), \cref{prop:ham-imply-nbhd} (forward and backward), and \cref{lem:extension-vector-field-germ}.
  }
\end{corollary}

\begin{proof}
  Let $\gamma^+ \colon I^+ \to N$ be a maximal forward trajectory for $Y_1$ along which $Y_1$ and $Y_2$ coincide.
  For all $t \in I^+$, we have$\frac{\der \gamma^+}{\der t} = \Res{Y_1}_{\gamma^+(t)} = \Res{Y_2}_{\gamma^+(t)}$, so $\gamma^+$ is also a forward trajectory for $Y_2$.
  By \cref{lem:criterion-max-traj}, $\gamma^+$ is maximal as a forward trajectory for $Y_1$ if and only if it is maximal as a forward trajectory for $Y_2$.
  The argument for backward trajectories is similar.
\end{proof}

\begin{lemma} \label{lem:reparametrize-0-1}
  Let $Y$ be a vector field on a manifold $N$, let $\chi \colon N \to [0, 1]$ be a smooth function, and let $z_0 \in N$.
  Then the following holds.
  \begin{itemize}[itemsep = 6pt]
    \item The domain of the maximal trajectory of $\chi Y$ starting at $z_0$ contains the domain of the maximal trajectory of $Y$ starting at $z_0$.
    \item If the maximal trajectory of $\chi Y$ starting at $z_0$ is not defined for all positive times, then neither is the maximal trajectory of $Y$ starting at $z_0$, and the maximal forward trajectories of $Y$ and of $\chi Y$ then have the same image in $N$.
      A similar result holds for the maximal backward trajectories.
  \end{itemize}
  \ynote{We use \cref{lem:reparametrize-0-1} in the proofs of \cref{lem:cutoff-smooth,prop:ham-imply-nbhd,lem:extension-vector-field-germ}.}
\end{lemma}

\begin{proof}
  Let $\gamma \colon I \to N$ be the maximal trajectory of $Y$ starting at $z_0$.
  Let $\tau \colon J \to I$ be the maximal trajectory of the vector field $\chi(\gamma(t)) \frac{\del}{\del t}$ on the interval~$I$, starting at $0$.
  A direct calculation shows that $\gamma \circ \tau \colon J \to N$ is a trajectory of the vector field $\chi Y$ starting at $z_0$.
  Let $\sigma \colon K \to N$ be the \emph{maximal} trajectory of $\chi Y$ starting at $z_0$.
  By maximality and by the second item of \cref{lem:flow-interval}, we have
  \begin{equation*}
    K \supset J \supset I \quad \stext{and} \quad \gamma \circ \tau = \Res{\sigma}_J.
  \end{equation*}
  In particular, this proves the first item.

  Let
  \begin{equation*}
    K^+ := K \cap [0, \infty), \quad J^+ := J \cap [0, \infty), \quad \stext{and} \quad I^+ := I \cap [0, \infty).
  \end{equation*}
  Suppose that $K^+$ is bounded.
  Then so are $J^+$ and $I^+$.
  By the first item of \cref{lem:flow-interval}, $\tau(J^+)=I^+$.
  By \cref{lem:criterion-max-traj}, $\gamma(I^+)$ is not contained in any compact subset of $N$.
  But $\gamma(I^+) = \gamma(\tau(J^+)) = \sigma(J^+)$, so $\sigma(J^+)$ is not contained in any compact subset of $N$.
  \cref{lem:criterion-max-traj} then implies that $J^+=K^+$.
  So $\gamma(I^+)=\sigma(K^+)$.
  A similar argument holds if $K^-$ is bounded.
  This proves the second item.
\end{proof}

\begin{lemma} \label{lem:cutoff-smooth}
  Let $Z$ be a non-empty closed subset of a manifold $N$, let $Y$ be a vector field on~$N$ whose time-$1$ flow excises $Z$ from $N$, and let $\chi \colon N \to [0, 1]$ be a smooth function that is equal to $1$ on $Z$.
  Then the time-$1$ flow of $\chi Y$ also excises $Z$ from $N$.
  \ynote{We use \cref{lem:cutoff-smooth} in \cref{cor:cutoff-smooth} and in the proof of \cref{thm:extension-vector-field-germ}.}
\end{lemma}

\begin{proof}
  Fix any $z_0 \in N$.
  Let $I^-$ and $J^-$ be the domains of the maximal backward trajectories of $Y$ and of $\chi Y$ starting at $z_0$.
  By \cref{lem:time-one-excise}, $I^-=(-\infty, 0]$.
  By the first part of \cref{lem:reparametrize-0-1}, $J^- \supset I^-$.
  So $J^-=(-\infty, 0]$.

  Next, let
  \begin{equation*}
    \gamma^+ \colon I^+ \to N \quad \stext{and} \quad \sigma^+ \colon J^+ \to N
  \end{equation*}
  be the maximal forward trajectories of $Y$ and of $\chi Y$ starting at $z_0$.

  Suppose that $J^+ \subset [0, 1)$.
  By the first part of \cref{lem:reparametrize-0-1}, $I^+ \subset J^+$.
  So $I^+ \subset [0, 1)$, and by \cref{lem:time-one-excise}, $z_0 \in Z$.

  In the other direction, suppose that $z_0 \in Z$.
  By \cref{lem:time-one-excise}, $I^+ \subset [0, 1)$, and the maximal forward trajectory $\gamma^+$ stays in $Z$.
  So the vector fields $Y$ and $\chi Y$ coincide along~$\gamma^+$.
  By \cref{cor:max-traj-iff}, $\gamma^+ = \sigma^+$; in particular, $I^+ = J^+$.
  So $J^+ \subset [0, 1)$.

  We have shown that $J^- = (-\infty, 0]$, and that $ J^+ \subset [0, 1)$ if and only if $z_0 \in Z$.
  Because $z_0 \in N$ was arbitrary, by \cref{lem:time-one-excise}, the time-$1$ flow of $\chi Y$ excises $Z$ from~$N$.
\end{proof}

\begin{corollary} \label{cor:cutoff-smooth}
  Let $Z$ be a non-empty closed subset of a manifold $N$.
  Suppose that there exists a vector field on~$N$
  whose time-$1$ flow excises $Z$ from $N$.
  Then for any neighbourhood $U$ of $Z$ in $N$ there exists a vector field on $N$ that is supported in $U$ and whose time-$1$ flow excises $Z$ from $N$.
  Indeed, multiply the vector field by a smooth function $\chi \colon N \to [0, 1]$ that is equal to $1$ on $Z$ and is supported in $U$, and apply \cref{lem:cutoff-smooth}.
  \ynote{\\ We use \cref{cor:cutoff-smooth} in the proof of \cref{cor:hamiltonian-excision-new}.}
\end{corollary}

A continuous real-valued function $f$ on a topological space \emph{vanishes at infinity} if for every $\eps > 0$ the superlevel set $\Set{\abs{f} \geq \eps}$ is compact.
For example, the zero function vanishes at infinity, and the reciprocal of any positive exhaustion function vanishes at infinity.

We will use the following result.

\begin{lemma} \label{lem:vanish-at-infinity-in-nbhd-v1}
  Let $M$ be a manifold, $Z$ a closed subset of $M$, $U$ a neighbourhood of $Z$ in $M$, and $H \colon U \to \R$ a continuous function that vanishes on $Z$.
  Then there exists a closed neighbourhood $C$ of $Z$ in $M$ that is contained in $U$ and such that $\Res{H}_C \colon C \to \R$ vanishes at infinity.
  \ynote{\\ We refer to \cref{lem:vanish-at-infinity-in-nbhd-v1} in the proofs of \cref{prop:ham-imply-nbhd,lem:extension-vector-field-germ}.}
\end{lemma}

\begin{proof}
  Fix an exhaustion function $\zeta \colon M \to [1, \infty)$.
  Let $C'$ be a closed neighbourhood of~$Z$ in $M$ that is contained in $U$.
  Then
  \begin{equation*}
    C := \Set{y \in C' \mmid \abs{H(y)} \leq 1/\zeta(y)}
  \end{equation*}
  is closed in $M$ (because $H|_{C'}$ and $\Res{\zeta}_{C'}$ are continuous on $C'$ and $C'$ is closed in $M$), and it is a neighbourhood of $Z$ in $M$
  (because it contains the intersection of the neighbourhood $C'$ of $Z$ with the neighbourhood $\Set{y \in U \mmid \abs{H(y)} < 1/\zeta(y)}$ of $Z$).
  We claim that $\Res{H}_C$ vanishes at infinity.
  Indeed, let $\eps > 0$.
  Because $\zeta \colon M \to [1, \infty)$ is proper, the set
  \begin{equation*}
    K := \Set{y \in M \mmid \zeta(y) \leq 1/\eps}
  \end{equation*}
  is compact.
  If $y \in C$ and $\abs{H(y)} \geq \eps$, then $\eps \leq \abs{H(y)} \leq 1/\zeta(y)$, so $y \in K$.
  So the set $C \cap \Set{\abs{H} \geq \eps}$, being a closed subset of the compact set $K$, is compact.
  So $\Res{H}_C$ vanishes at infinity.
\end{proof}

\begin{lemma} \label{lem:defined-all-times-nbhd}
  Let $(M, \omega)$ be a symplectic manifold, ${H \colon M \to \R}$ a smooth function, and~$C$ a closed subset of $M$ such that $\Res{H}_C$ vanishes at infinity.
  Let $\chi \colon M \to [0, 1]$ be a smooth function whose support is contained in $C$ and such that $d\chi$ vanishes whenever $\chi$ vanishes.
  Let $F := \chi H$.
  Then every maximal trajectory of $X_F$ that starts in the set $C \cap H^{-1}(0)$ stays in this set, and every maximal trajectory of $X_F$ that starts outside this set is defined for all times.
  \ynote{We use \cref{lem:defined-all-times-nbhd} in the proofs of \cref{prop:ham-imply-nbhd,lem:extension-vector-field-germ}.}
\end{lemma}

\begin{proof}
  It is enough to show that every maximal trajectory of $X_F$ that starts in the set $M \setminus (C \cap H^{-1}(0))$ stays in this set and is defined for all times.
  Let $z \in M \setminus (C \cap H^{-1}(0))$.
  \begin{itemize} [itemsep = 6pt]
  \item
    Suppose that $\chi(z) = 0$.
    Then also $d\chi|_z = 0$, and so $\Res{X_F}_z = 0$.
    So the maximal trajectory of $X_F$ that starts at $z$ is constant and is defined for all times.
  \item
    Suppose that $\chi(z) \neq 0$.
    Then $z \in C$, and $H(z) \neq 0$.
    So $a := F(z) \neq 0$.
    So the maximal trajectory of $X_F$ that starts at $z$ is contained in the non-zero level set $\Set{F = a}$, which is contained in the set $M \setminus (C \cap H^{-1}(0))$.
    The function $F$ vanishes at infinity.
    (Indeed, for each $\eps > 0$, the set $\Set{\abs{F} \geq \eps}$ is closed, and it is contained in $C \cap \Set{\abs{H} \geq \eps}$, which is compact because $\Res{H}_C$ vanishes at infinity.)
    So the non-zero level set $\Set{F = a}$ is compact.
    By the escape lemma (\cref{lem:escape-1}), the maximal trajectory of $X_F$ that starts at $z$ is defined for all times.
  \end{itemize}
\end{proof}

\begin{proposition}[Restriction of Hamiltonian excision] \label{prop:ham-imply-nbhd}
  Let $(M, \omega)$ be a symplectic manifold, $Z$ a closed subset of $M$, and $H \colon M \to \R$ a smooth function that vanishes on $Z$.
  Assume that the time-$1$ Hamiltonian flow of $H$ excises $Z$ from $M$.
  Then for each neighbourhood $U$ of $Z$ in~$M$ there exists a smooth function $\chi \colon M \to [0, 1]$ that is equal to~$1$ on $Z$, whose support is contained in $U$, and such that the time-$1$ Hamiltonian flow of $F := \chi H$ excises $Z$ from $M$.
  \ynote{ \\ We use \cref{prop:ham-imply-nbhd} in the proofs of \cref{thm:locality-excision,thm:stratmann}.}
\end{proposition}

\begin{proof}
  Let $C$ be a closed neighbourhood of $Z$ in $M$ that is contained in $U$ and such that $\Res{H}_C$ vanishes at infinity; such a neighbourhood exists by \cref{lem:vanish-at-infinity-in-nbhd-v1}.
  Let $\chi \colon M \to [0, 1]$ be a smooth function that is equal to $1$ near $Z$ and is supported in the interior of $C$.
  Assume, moreover, that $d\chi$ vanishes wherever $\chi$ vanishes; this can be achieved, e.g., by replacing $\chi$ by $\chi^2$.
  Let $F := \chi H$.
  Let $X_H$ be the Hamiltonian vector field of $H$, and let $X_F$ be the Hamiltonian vector field of $F$.
  Let $z \in M$.
  \begin{itemize}[itemsep = 6pt]
    \item
      Suppose that $z \not\in C \cap H^{-1}(0)$.
      Then, by \cref{lem:defined-all-times-nbhd}, the maximal trajectory of~$X_F$ that starts at $z$ is defined for all times.
    \item
      Suppose that $z \in C \cap H^{-1}(0)$.
      By \cref{lem:defined-all-times-nbhd}, the maximal trajectory of $X_F$ that starts at $z$ stays in the set $H^{-1}(0)$.
      On this set, we have $dF = \chi dH$, and so $X_F = \chi X_H$.
      By \cref{cor:max-traj-iff}, the maximal trajectory of $X_F$ starting at~$z$ coincides with the maximal trajectory of $\chi X_H$ starting at~$z$.
      By \cref{lem:reparametrize-0-1}, the domain of this trajectory contains the domain of the maximal trajectory of $X_H$ starting at $z$.
  \end{itemize}
  In either case, we obtain $S_{X_F}(z) \leq S_{X_H}(z) < 0 < T_{X_H}(z) \leq T_{X_F}(z)$.

Because $H=0$ and $\chi = 1$ on $Z$, we have $X_F=X_H$ on $Z$.
By \cref{lem:time-one-excise}, $Z$ is invariant under the forward-flow of $X_H$;
by this and \cref{cor:max-traj-iff}, $T_{X_F}=T_{X_H}$ on~$Z$.

  We now compare the forward and backward exit times of $X_F$ and $X_H$, recalling that the time-$1$ flow of $X_H$ excises $Z$ from $M$:
  \begin{itemize}
  \item On $M$, we have $S_{X_F} \leq S_{X_H} < -1$.
  \item On $M \setminus Z$, we have $T_{X_F} \geq T_{X_H} > 1$.
  \item On $Z$, we have $T_{X_F} = T_{X_H} \leq 1$.
  \end{itemize}
  By \cref{lem:time-one-excise}, we conclude that the time-$1$ flow of $X_F$ also excises $Z$ from $M$.
\end{proof}

\begin{remark}
  The following table summarizes the arguments in the proofs of \cref{lem:defined-all-times-nbhd,prop:ham-imply-nbhd}.
  
  \def\arraystretch{1.2}%
  \noindent\centering
   \begin{tabular} {|c|c|c|c|l|}
    \hline
    Subset of $M$ & Vect.\ field & Back. exit time & Forw.\ exit time & Reasoning \\ \hline \hline
    $\Set{\chi = 0}$ & $X_F = 0$ & $S_{X_F} = -\infty$ & $T_{X_F} = \infty$ & fixed points \\ \hline
    $\Set{F \neq 0}$ & $X_F$ & $S_{X_F} = -\infty$ & $T_{X_F} = \infty$ & compact level sets \\ \hline
    $\Set{\chi \neq 0, F = 0}$ & $X_F = \chi X_H$ & $S_{X_F} \leq S_{X_H} $ & $T_{X_F} \geq T_{X_H}$ & $0 \leq \chi \leq 1$ \\ \hline \hline
    $Z$ & $X_F = X_H$ & & $T_{X_F} = T_{X_H}$ & forward-invariance \\ \hline
  \end{tabular}
\end{remark}

\begin{theorem}[Locality of Hamiltonian excision] \label{thm:locality-excision}
  Let $Z_1$ and $Z_2$ be closed subsets of symplectic manifolds $(M_1, \omega_1)$ and $(M_2, \omega_2)$, respectively.
  Suppose that there exist an open neighbourhood $U_1$ of $Z_1$ in $M_1$ and an open neighbourhood $U_2$ of $Z_2$ in $M_2$ and a diffeomorphism $\varphi \colon U_1 \to U_2$ that takes $Z_1$ onto $Z_2$.
  Suppose that there exists a smooth function on $M_1$ that vanishes on $Z_1$ and whose time-$1$ Hamiltonian flow excises $Z_1$ from $M_1$.
  Then there exists a smooth function on $M_2$ that vanishes on $Z_2$ and whose time-$1$ Hamiltonian flow excises $Z_2$ from $M_2$.
  \ynote{We use \cref{thm:locality-excision} in \cref{exm:narrow nbhd}.}
\end{theorem}

\begin{proof}
  We claim that there exist subsets
  \begin{equation*}
    Z_1 \subset C_1 \subset V_1 \subset U_1 \quad \stext{and} \quad Z_2 \subset C_2 \subset V_2 \subset U_2
  \end{equation*}
  such that
  \begin{itemize}
    \item $C_1$ is closed in $M_1$, $Z_1$ is contained in the interior of $C_1$, and $V_1$ is open in $M_1$, and similarly for the sets in $M_2$.
    \item $\varphi(C_1) = C_2$, and $\varphi(V_1) = V_2$.
  \end{itemize}
  Indeed, take $V_1 := U_1 \cap \varphi^{-1}(U_2)$ and $V_2 = \varphi(U_1) \cap U_2$, let $C_1'$ be a closed neighbourhood of $Z_1$ in $M_1$ that is contained in $V_1$ and let $C_2'$ be a closed neighbourhood of $Z_2$ in $M_2$ that is contained in $V_2$, and take $C_1 := C_1' \cap \varphi^{-1}(C_2')$ and $C_2 := \varphi(C_1') \cap C_2'$.

  By \cref{prop:ham-imply-nbhd}, there exists a smooth function $H_1 \colon M_1 \to \R$ that vanishes on $Z_1$, whose support $\supp H_1$ is contained in $C_1$, and whose time-$1$ Hamiltonian flow excises $Z_1$ from $M$.
  Because the set $\varphi(\supp H_1)$ is closed in $V_2$ and contained in $C_2$, it is also closed in $M_2$.
  So the function $H_2 \colon M_2 \to \R$ that is equal to $H_1 \circ \varphi^{-1}$ on $U_2$ and $0$ outside~$U_2$ is smooth.

  Let $X_1$ and $X_2$ be the Hamlitonian vector field of $H_1$ and $H_2$.

  Because $V_1$ contains $\supp H_1$, the backward and forward exit times of the restricted vector field $\Res{X_1}_{V_1}$ are the restrictions to $V_1$ of the backward and forward exit times of~$X_1$, and similarly for $X_2$.
  By this and \cref{lem:time-one-excise}, the time-$1$ flow of $X_1$ excises $Z_1$ from $M_1$ if and only if the time-$1$ flow of $\Res{X_1}_{V_1}$ excises $Z_1$ from $V_1$, and similarly for $X_2$.

  Because $\varphi$ restricts to a diffeomorphism $V_1 \to V_2$, it intertwines the backward and forward exit times of the restricted vector fields $\Res{X_1}_{V_1}$ and $\Res{X_2}_{V_2}$.
  By this and \cref{lem:time-one-excise}, the time-$1$ flow of $\Res{X_1}_{V_1}$ excises $Z_1$ from $V_1$ if and only if the time-$1$ flow of $\Res{X_2}_{V_2}$ excises $Z_2$ from $V_2$.
  Together with the previous paragraph, this proves the theorem.
\end{proof}

\section{Flows, revisited}
\label{sec:flows}

Let $Y$ be a vector field on a manifold $N$, and let
\begin{equation*}
  \Phi_Y \colon D_Y \to N
\end{equation*}
be its maximal flow, with flow domain
\begin{equation*}
  D_Y = \Set{(z, t) \in N \times \R \mmid S_Y(z) < t < T_Y(z)}.
\end{equation*}

We will need versions of the escape lemma that are stronger than  \cref{lem:escape-1}.
Here is a well-known one:

\begin{lemma}[Escape lemma, Version 2] \label{lem:escape-2}
  For any $x \in N$, if $T_Y(x) < \infty$, then the maximal forward trajectory of $Y$ starting at $x$ escapes every compact set, and if $S_Y(x) > -\infty$, then the maximal backward trajectory of $Y$ ending at $x$ escapes every compact set.
  \ynote{We use \cref{lem:escape-3} in the proof of \cref{lem:slow-zone}.}
\end{lemma}

And here is stronger one:

\begin{lemma}[Escape lemma, Version 3] \label{lem:escape-3}
Consider the fibrewise closure of $D_Y$:
\begin{equation*}
  \wt{D}_Y := \Set{(z, t) \in N \times \R \mmid S_Y(z) \leq t \leq T_Y(z)}.
\end{equation*}
Let $\ol{N} := N \cup \Set{\infty}$ be the one-point compactification of $N$, and let
\begin{equation*}
  \wt{\Phi}_Y \colon \wt{D}_Y \to \ol{N}
\end{equation*}
be the map whose restriction to $D_Y$ is $\Phi_Y$
and that sends $\wt{D}_Y \setminus D_Y$ to $\infty$.
Then $\wt{\Phi}_Y$ is continuous.
\ynote{We use \cref{lem:escape-3} in the proof of \cref{cor:D-plus}.}
\end{lemma}

\begin{proof}[Proof of \cref{lem:escape-2,lem:escape-3}]
  We will prove \cref{lem:escape-3}, which implies \cref{lem:escape-2}.

  Suppose the contrary; then there is a sequence $((z_i, t_i))_{i = 1}^\infty \subset D_Y$ converging to some $(z_\infty, t_\infty) \in \wt{D}_Y \setminus D_Y$ while $(\Phi_Y(z_i, t_i))_{i = 1}^\infty$ is contained in a compact set $K_1$ which is itself contained in the interior of a compact $K \subset N$.
  Without loss of generality, we assume $t_\infty = T_Y(z_\infty) > 0$.
  By passing to a subsequence, we assume that $\lim_{i \to \infty} \Phi_Y(z_i, t_i) = y_1 \in K_1$ and that $t_i > 0$ for all $i$.
  By the continuity of $\Phi_Y$ at $(y_1, 0)$, there is an $N \in \N$ and $\eps_1 \in (0, t_\infty)$ such that $\Phi_Y(z_i, t) = \Phi_Y(\Phi_Y(z_i, t_i), t - t_i)\in K$ for any $t \in (t_i - \eps_1, t_i)$ and $i \geq N$.
  On the other hand, by the escape lemma (\cref{lem:escape-1}) there is an $\eps_2 \in (0, \eps_1)$ such that $y_2 := \Phi_Y(z_\infty, t_\infty - \eps_2) \in N \setminus K$.
  But $y_2 = \lim_{i \to \infty} \Phi_Y(z_i, t_i - \eps_2)$, and $\Phi_Y(z_i, t_i - \eps_2) \in K$ for all $i \geq N$, so $y_2 \in K$, giving a contradiction.
\end{proof}

\begin{corollary} \label{cor:D-plus}
  Let
  \begin{multline*}
    \D_Y^+ := \big\{(z_0, z, t) \in N \times N \times [0, \infty) \mid \\
    \stext{there exists a trajectory} \gamma \colon [0, t] \to N \stext{of} Y \stext{with} \gamma(0) = z_0 \stext{and} \gamma(t) = z \big\}.
  \end{multline*}
  Then the following holds.

  \begin{itemize}[itemsep = 6pt]
    \item[(i)]
      For any $(x_0, x,t) \in \D_Y^+$, the backward exit time $S_Y$ is continuous at $x_0$ if and only if it's continuous at $x$, and the forward exit time $T_Y$ is continuous at $x_0$ if and only if it's continuous at $x$.
    \item[(ii)]
      For any point $(x_0, x, t_\infty)$ in $N \times N \times [0, \infty)$, if $(x_0, x, t_\infty)$ is an accumulation point of $\D_Y^+$, and if the backward exit time $S_Y$ is continuous at $x$ or at~$x_0$ or the forward exit time $T_Y$ is continuous at $x$ or at~$x_0$, then $(x_0, x, t_\infty)$ is in $\D_Y^+$.
  \end{itemize}
  \ynote{We use \cref{cor:D-plus} in the proof of \cref{lem:slow-zone}.}
\end{corollary}

\begin{remark} \label{rem:cont-infty}
  Because $T_Y \colon N \to (0, \infty]$ and $S_Y \colon N \to [-\infty, 0)$ are, respectively, lower-semi-continuous and upper-semi-continuous, $T_Y$ is automatically continuous at any point where $T_Y = \infty$, and $S_Y$ is automatically continuous at any point where $S_Y = \infty$.
\eor
\end{remark}

\begin{proof}[Proof of \cref{cor:D-plus}]
  Let $(x_0, x, t) \in \D_Y^+$.
  Then $x_0$ is contained in the domain of the time-$t$ flow $\Phi_Y(\cdot, t)$ and $x$ is contained in the image of the time-$t$ flow $\Phi_Y(\cdot, t)$.
  Because the domain and the image of the time-$t$ flow are open subsets of $N$, and the time-$t$ flow is a diffeomorphism between them, in particular $\Phi_Y(\cdot, t)$ is a diffeomorphism from an open neighbourhood $U_0$ of $x_0$ onto an open neighbourhood $U$ of $x$ (see~\cref{eq:Phi-t}).
  On $U_0$, we have $S_Y \circ \Phi_Y(\cdot, t) = S_Y(\cdot) - t$ and $T_Y \circ \Phi_Y(\cdot, t) = T_Y(\cdot) - t$ (see~\cref{eq:exit-time-shift}).
  This implies item (i).

  Let $\Set{(z_{0, j}, z_j, t_j)}_{j = 1}^\infty$ be a sequence in $\D_Y^+$ that converges to a point $(x_0, x, t_\infty) \in N \times N \times [0, \infty)$ with $S_Y$ continuous at $x$.
  Then $S_Y(z_j)$ converges to $S_Y(x)$.
  Because $S_Y(z_j) < -t_j  \leq 0$ for all $j$, in the limit we get $S_Y(x) \leq -t_\infty \leq 0$.
  If $S_Y(x) < -t_\infty$, the continuity of the flow at $(x, -t_\infty)$ implies that $x_0 = \Phi_Y(x, -t_\infty)$, and so $(x_0, x, t_\infty) \in \D_Y^+$.
  Otherwise, $S_Y(x) = -t_\infty$; we claim that this possibility never occurs.
  Indeed, if $S_Y(x) = -t_\infty$, then by \cref{lem:escape-3}, we have $\Phi_Y(z_j, -t_j)\to \infty$, so $z_{0, j} \to \infty$, whereas we assumed that $z_{0, j} \to x_0$.

  Similarly, let $\Set{(z_{0, j}, z_j, t_j)}_{j = 1}^\infty$ be a sequence in $\D_Y^+$ that converges to a point $(x_0, x, t_\infty) \in N \times N \times [0, \infty)$ with $T_Y$ continuous at $x_0$.
  Then $T_Y(z_{0, j})$ converges to $T_Y(x_0)$.
  Because $0 \leq t_j < T_Y(z_{j, 0})$ for all $j$, in the limit we get $0 \leq t_\infty \leq T_Y(x_0)$.
  If $t_\infty < T_Y(x_0)$, the continuity of the flow at $(x_0, t_\infty)$ implies that $x = \Phi_Y(x_0, t_\infty)$, and so $(x_0, x, t_\infty) \in \D_Y^+$.
  Otherwise, $T_Y(x_0) = t_\infty$; we claim that this possibility never occurs.
  Indeed, if $T_Y(x_0) = t_\infty$, then by \cref{lem:escape-3}, we have $\Phi_Y(z_{0, j}, t_j)\to \infty$, so $z_j \to \infty$, whereas we assumed that $z_j \to x$.
  This proves (ii).
\end{proof}

The following lemma will allow us to modify a given a Hamiltonian vector field to obtain a new Hamiltonian vector field whose trajectories outside a given closed invariant set are defined for all times.

\begin{lemma} \label{lem:slow-zone}
  Let $Y$ be a vector field on a manifold $N$, and let $Q \subset N$ be a closed subset.
  Suppose that at each point of $Q$ the backward or forward exit time is continuous.
  If the complement of $Q$ is invariant under the forward-flow of $Y$ (equivalently, $Q$ is invariant under the backward-flow of $Y$), then $Q$ has a neighbourhood $V$ in $N$ such that any maximal forward trajectory of $Y$ that is contained in $V \setminus Q$ is defined for all positive times.
  \ynote{We use \cref{lem:slow-zone} in the proof of \cref{lem:extension-vector-field-germ}.}
\end{lemma}

\begin{proof}
  Because $Q$ is closed in $N$, there exists a smooth function $\rho \colon N \to [0, 1]$ whose zero locus is $Q$; see, e.g., \cite[Theorem~2.29]{MR2954043}.
  Let $\zeta \colon N \to [0, \infty)$ be an exhaustion function for $N$.
  Let $\D_Y^+$ be as in \cref{cor:D-plus}, and let
  \begin{equation*}
    \E_Y := \Set{(z_0, z, t) \in \D_Y^+ \mmid \zeta(z) - \zeta(z_0) \geq t \stext{and} \rho(z_0)(\zeta(z) - \zeta(z_0)) = 1}.
  \end{equation*}
  Note that
  \begin{equation} \label{eq:bounded}
    \stext{for all} (z_0, z, t) \in \E_Y, \quad t \stext{and} \zeta(z_0) \stext{are both in} [0, \zeta(z)].
  \end{equation}
  Let $E_Y$ be the image of $\E_Y$ under the projection map $(z_0, z, t) \mapsto z$.
  We claim that the closure of $E_Y$ is disjoint from $Q$.
  Indeed, assuming otherwise, let $(z_{0, j}, z_j, t_j)$ be a sequence in $\E_Y$ with $\lim_{j \to \infty} z_j = x \in Q$.
  By \cref{eq:bounded}, and since $\zeta(z_j) \to \zeta(x)$, after passing to a subsequence we may assume that $t_j \to t_\infty$ and $z_{0, j} \to x_0$.
  By \cref{cor:D-plus}, $(x_0, x, t_\infty) \in \D_Y^+$.
  Finally, $1 = \rho(z_{0, j}) \big(\zeta(z_j) - \zeta(z_{0, j}) \big) \to \rho(x_0) \big(\zeta(x) - \zeta(x_0) \big)$, so $\rho(x_0) \neq 0$, and so $x_0 \not \in Q$.
  Because the complement of $Q$ is invariant under the forward flow of $Y$ and $x = \Phi_Y(x_0, t_\infty)$, this contradicts our assumption that $x \in Q$.

  Then $V := N \setminus E_Y$ is a neighbourhood of $Q$ in $N$.
  Let $\gamma$ be a maximal forward trajectory of $Y$ that is contained in $V \setminus Q$.
  We claim that $\gamma$ is defined for all positive times.
  Assuming otherwise, let $z_0 \in \gamma$ be such that $T_Y(z_0) < 1$.
  By the escape lemma (\cref{lem:escape-2}) and the intermediate value theorem, there exists $t \in (0,T_Y(z_0))$ such that $z := \Phi_Y(z_0, t)$ satisfies $\zeta(z) = \zeta(z_0) + \rho(z_0)^{-1}$.
  Note that $\zeta(z) - \zeta(z_0) \geq 1 > T_Y(z_0) > t$.
  Then $(z_0, z, t) \in \E_Y$.
  So $z \in E_Y$, which contradicts the assumption that $\gamma$ is contained in $V$.
\end{proof}

\section{From smooth excision to Hamiltonian excision}
\label{sec:smooth-to-hamiltonian}

In this section, we extend an excision by a null vector field on a submanifold into a Hamiltonian excision on the ambient symplectic manifold.
We begin by extending a null vector field to a Hamiltonian vector field:

\begin{lemma} \label{lem:extension-vector-field-singular}
  Let $N$ be an embedded submanifold of a symplectic manifold $(M, \omega)$, and let $Y$ be a null vector field on $N$.
  Then there exist an open neighbourhood $U$ of $N$ in~$M$ and a smooth function $H \colon U \to \R$, such that $N$ is closed in $U$, the function $H$ vanishes on $N$, and the Hamiltonian vector field $X_H$ of $H$ coincides with $Y$ on $N$.
  \ynote{ \\ We use \cref{lem:extension-vector-field-singular} in the proof of \cref{thm:extension-vector-field-germ}.}
\end{lemma}

\begin{proof}
  Let $\omega_N$ denote the pullback of $\omega$ to $N$.
  By assumption, $Y \contract \omega_N = 0$.
  Let $T_{M/N}$ and $T^*_{M/N}$ denote the normal and conormal bundles to $N$ in $M$.
  Then $Y \in \Gamma((TN)^{\omega_N})$, and
  \begin{equation*}
    \alpha := Y \intprod \omega \in \Gamma(T^*_{M/N}).
  \end{equation*}
  Viewing $\alpha$ as a fibrewise linear function on $T_{M/N}$, and by the tubular neighbourhood theorem, there exist a tubular neighbourhood $U$ of $N$ in $M$ and a smooth function $H \colon U \to \R$, such that
  \begin{equation*}
    \Res{H}_N = 0 \quad \stext{and} \quad \Res{\der H}_N = \alpha.
  \end{equation*}
  This function is as required.
\end{proof}

In the setup of \cref{lem:extension-vector-field-singular}, let $Z$ be a closed subset of $N$, and suppose that the time-$1$ flow of the vector field $X$ excises $Z$ from~$N$.
Suppose, moreover, that $Z$ is closed not only in $N$ but also in the ambient manifold $M$.
Then we can multiply the function $H$ by a cutoff function $M \to [0, 1]$ that is supported in $U$ and is equal to $1$ near $Z$.
This yields a smooth function whose time-$1$ Hamiltonian flow is a diffeomorphism from a subset of $M \setminus Z$ that contains $N \setminus Z$ to a subset of $M$ that contains $N$.
We would like to choose the cutoff function such that the domain of this diffeomorphism will be all of $M \setminus Z$ and its image will be all of~$M$.

We will use the following technical lemma, which relies on \cref{lem:slow-zone} and which applies to Hamiltonian flows.

\begin{lemma} \label{lem:extension-vector-field-germ}
  Let $(M, \omega)$ be a symplectic manifold, $Z$ a closed subset of $M$, $U$ an open neighbourhood of $Z$ in $M$, and $H \colon U \to \R$ a smooth function that vanishes on $Z$.
  Consider the flow of the Hamiltonian vector field $X_H$ of $H$, let $Q$ be the closure in $U$ of the set of points whose trajectories pass through~$Z$,
  and suppose that the backward exit time $S_{X_H}$ is equal to $-\infty$ on $Q$.
  Then there is a closed neighbourhood $C$ of $Z$ in $M$ that is contained in $U$, such that the following holds.

  Let $\chi \colon M \to [0, 1]$ be any smooth function that is supported in $C$ and is equal to $1$ on~$Z$, and such that $d\chi$ vanishes whenever $\chi$ vanishes.
  Define $F: = \chi H$ on $U$ and $F=0$ outside the support of $\chi$.
  Let $X_F$ be its Hamiltonian vector field.
  Then any maximal trajectory of $X_F$ starting in $M \setminus Q$ is defined for all times.
  \ynote{ \\ We use \cref{lem:extension-vector-field-germ} in the proof of \cref{thm:extension-vector-field-germ}.}
\end{lemma}

\begin{proof}
The assumptions of the lemma imply that the backward exit time $S_{X_H} \colon U \to [-\infty, 0)$ is continuous at the points of $Q$ (see \cref{rem:cont-infty}) and that $Q$ is invariant under the flow of $X_H$ on $U$.
Applying \cref{lem:slow-zone} to $X_H$ and to $-X_H$ on $U$, we find a neighbourhood $V$ of $Q$ in $U$ such that every maximal forward trajectory of $X_H$ that is contained in $V \setminus Q$ is defined for all positive times and every maximal backward trajectory of $X_H$ that is contained in $V \setminus Q$ is defined for all negative times.

By \cref{lem:vanish-at-infinity-in-nbhd-v1}, we find a closed neighbourhood $C$ of $Z$ in $M$, contained in $V$, such that $\Res{H}_{C}$ vanishes at infinity.
We claim that $C$ has the required properties.

Let $\chi$ and $F$ be as in the statement of the lemma.

Because $d\chi$ vanishes whenever $\chi$ vanishes, the support of $X_F$ is contained in the support of $\chi$, which is contained in $U$.
So $M \setminus U$ is fixed under the flow of $X_F$, and $U$ is invariant under the flow of $X_F$ (see \cref{cor:invariant-submfd}).
So we may now focus on the flow of $\Res{X_F}_U$ on~$U$.
It remains to show that, for each $z \in U \setminus Q$, the maximal trajectory of $\Res{X_F}_U$ that starts at $z$ is defined for all times.
Fix such a $z$.

\begin{itemize} [itemsep = 6pt]
  \item
    Suppose that $z \not\in C \cap H^{-1}(0)$.
    By \cref{lem:defined-all-times-nbhd} for the manifold $U$, the maximal trajectory of $\Res{X_F}_U$ that starts at $z$ is defined for all times.
  \item
    Suppose that $z \in C \cap H^{-1}(0)$.
    By \cref{lem:defined-all-times-nbhd} for the manifold $U$, the maximal trajectory of $\Res{X_F}_U$ that starts at $z$ stays in the set $C \cap H^{-1}(0)$.
    On this set, we have $H=0$, so $\Res{X_F}_U = \chi X_H$.
    By \cref{cor:max-traj-iff}, this maximal trajectory coincides with the maximal trajectory of $\chi X_H$ starting at $z$.
    By \cref{lem:reparametrize-0-1}, the domain of this trajectory contains the domain of the maximal trajectory of $X_H$ starting at $z$.
    Because the set $C \cap H^{-1}(0)$ is contained in~$V$ and by the choice of $V$, and since $z \not\in Q$, the maximal trajectory of $X_H$ starting at $z$ is defined for all times.
    It follows that the maximal trajectory of $\Res{X_F}_U$ starting at $z$ is also defined for all times.
  \end{itemize}
\end{proof}

Combining \cref{lem:extension-vector-field-germ} with \cref{lem:extension-vector-field-singular}, we show how to get from a smooth excision by a null vector field on a submanifold to a Hamiltonian excision.

\begin{theorem} \label{thm:extension-vector-field-germ}
  Let $Z$ be a closed subset of a symplectic manifold $(M, \omega)$.
  Suppose that there exist an embedded submanifold $N$ of $M$ that contains $Z$ and a null vector field $Y$ on~$N$ whose time-$1$ flow excises $Z$ from $N$.
  Then there exists a smooth function $F \colon M \to \R$ whose time-$1$ Hamiltonian flow excises $Z$ from~$M$.
  \ynote{ \\ We use \cref{thm:extension-vector-field-germ} in the proofs of \cref{cor:hamiltonian-excision-new,prop:exc-epigraph,cor:epigraph-hamiltonian-excision}.}
\end{theorem}

\begin{proof}
  Let $N$ and $Y$ be as in the statement of the theorem.
  Denote by $\omega_N$ the pullback of $\omega$ to $N$.
  By \cref{lem:time-one-excise},
  \begin{equation*}
    S_Y = - \infty \stext{on} N, \quad T_Y > 1 \stext{on} N \setminus Z, \quad \stext{and} \quad T_Y \leq 1 \stext{on} Z.
  \end{equation*}

  By \cref{lem:extension-vector-field-singular}, we find an open neighbourhood $U$ of $N$ in $M$ and a smooth function $H \colon U \to \R$ such that $N$ is closed in $U$, $H$ vanishes on $N$, and the Hamiltonian vector field $X_H$ of $H$ coincides with $Y$ on $N$.
  By \cref{cor:invariant-submfd}, the subset $N$ of $U$ is invariant under the flow of $X_H$.
  In particular, every maximal trajectory of $Y$ in $N$ is also a maximal trajectory of $X_H$ in $U$.
  So $S_{X_H}|_N = S_Y$ and $T_{X_H}|_N = T_Y$, and so
  \begin{equation*}
    S_{X_H} = - \infty \stext{on} N, \quad T_{X_H} > 1 \stext{on} N \setminus Z, \quad \stext{and} \quad T_{X_H} \leq 1 \stext{on} Z.
  \end{equation*}

  Now consider the flow of $X_H$ on $U$, and let $Q$ be the closure in $U$ of the set of points whose trajectories pass through $Z$.
  Because $N$ is invariant under the flow of $X_H$ and is closed in $U$, the set $Q$ is contained in $N$, so $S_{X_H} = -\infty$ on $Q$.
  Applying \cref{lem:extension-vector-field-germ}, we find a closed neighbourhood $C$ of $Z$ in $M$ that is contained in $U$ and that has the properties described in \cref{lem:extension-vector-field-germ}.
  Let $\chi \colon M \to [0, 1]$ and $F := \chi H$ be as in \cref{lem:extension-vector-field-germ}, so that any maximal trajectory of $X_F$ starting in $M \setminus Q$ is defined for all times.
  Then
  \begin{equation*}
    S_{X_F} = -\infty \quad \stext{and} \quad T_{X_F} = \infty \qquad \stext{(on $M \setminus Q$, hence) on} M \setminus N.
  \end{equation*}

  On the set $N$, the function $H$ vanishes, so $\Res{X_F}_N = \chi X_H|_N = \chi Y$.
  By \cref{lem:cutoff-smooth}, the time-$1$ flow of $\chi Y$ on $N$ excises $Z$ from $N$.
  So the time-$1$ flow of $\Res{X_F}_N$ on $N$ excises $Z$ from $N$.

  Because $C$ is contained in $U$ and $N$ is closed in $U$, the intersection $C \cap N$ is closed in~$C$.
  Because $C$ is closed in $M$, it follows that $C \cap N$ is closed in $M$.
  Because the support of $X_F$ is contained in $C$ and by \cref{cor:invariant-submfd}, $N$ is invariant under the flow of $X_F$.
  In particular, every maximal trajectory of $\Res{X_F}_N$ in $N$ is also a maximal trajectory of $X_F$ in $M$.
  Because the time-$1$ flow of $\Res{X_F}_N$ on $N$ excises $Z$ from $N$,
  \begin{equation*}
    S_{X_F} = - \infty \stext{on} N, \quad T_{X_F} > 1 \stext{on} N \setminus Z, \quad \stext{and} \quad T_{X_F} \leq 1 \stext{on} Z.
  \end{equation*}
  From this analysis of the backward and forward exit times of the flow of $X_F$ on $M \setminus N$, on $N$, on $N \setminus Z$, and on $Z$, we conclude that the time-$1$ flow of $X_F$ excises $Z$ from~$M$.
\end{proof}

\begin{corollary} \label{cor:hamiltonian-excision-new}
  Let $N_0$ be a manifold, $Z_0$ a closed subset of $N_0$, and $Y_0$ a vector field on $N_0$ whose time-$1$ flow excises $Z_0$ from $N_0$.
  Let $(M, \omega)$ be a symplectic manifold, let $U_0$ be an open neighbourhood of $Z_0$ in $N_0$, and let $i \colon U_0 \to M$ be an embedding whose restriction to $Z_0$ is proper and such that $Y_0 \contract i^* \omega = 0$.
  Then there exists a smooth function $F \colon M \to \R$ whose time-$1$ Hamiltonian flow excises $Z := i(Z_0)$ from $M$.
  \ynote{ \\ We use \cref{cor:hamiltonian-excision-new} in the proof of \cref{thm:stratmann}.}
\end{corollary}

\begin{proof}
  By \cref{cor:cutoff-smooth}, we may assume that the support of $Y_0$ is contained in $U_0$.
  Let $N := i(U_0)$; then $N$ is an embedded submanifold of $M$ that contains $Z$, and $i \colon U_0 \to N$ is a diffeomorphism.
  Let $X := i_* Y_0$ be the vector field on $N$ such that $Y_0$ and $X$ are $i$-related; then $X$ is a null vector field on $N$ whose time-$1$ flow excises $Z$ from $N$.
  Because the restriction of $i$ to $Z_0$ is proper, $Z$ is closed in $M$.
  The result then follows from \cref{thm:extension-vector-field-germ}.
\end{proof}

We now recover Stratmann's result.
Here is an equivalent statement:

\begin{theorem}[Stratmann \cite{MR4537129}] \label{thm:stratmann}
  Let $\varphi \colon [-1, \infty) \times \Sigma \to M$ be a proper injective immersion from the manifold-with-boundary $[-1, \infty) \times \Sigma$, where $\Sigma$ is some manifold, to a symplectic manifold $(M, \omega)$.
  Suppose that $\frac{\partial}{\partial t} \contract \varphi^* \omega = 0$, where $\frac{\del}{\del t}$ is the standard vector field on $[-1, \infty)$.
  Then $M$ and $M \setminus \varphi([0, \infty) \times \Sigma)$ are symplectomorphic.
  Moreover, for any neighbourhood of $\varphi([0, \infty) \times \Sigma)$ in $M$, there exists a symplectomorphism $M \to M \setminus \varphi([0, \infty) \times \Sigma)$ that is the identity outside the given neighbourhood.
\end{theorem}

\begin{proof}
  Let $Z_0 := [0, \infty) \times \Sigma$.
  Consider the manifold $N_0 := (-1, \infty) \times \Sigma$, denote by $t$ the coordinate on its first factor, and let $X_0$ be the vector field on $N_0$ that is equal to the product of $(t+1)^2 \frac{\partial}{\partial t}$ with some smooth function that takes values in $[0, 1]$, is equal to $1$ when $t \geq 0$, and vanishes when $t \leq -\frac12$.
  By an explicit calculation of the flow of $(t+1)^2 \frac{\partial}{\partial t}$ on $(-1, \infty)$, we conclude that the time-$1$ flow of $X_0$ excises $Z_0$ from $N_0$.
  Because the map $\varphi \colon [-1, \infty) \times \Sigma \to M$ is proper, its restriction to the closed subset $Z_0$ is proper.
  The result then follows from \cref{cor:hamiltonian-excision-new}, with $U_0 := N_0$ and $i := \Res{\varphi}_{N_0}$, and from \cref{prop:ham-imply-nbhd}.
\end{proof}

\section{Smooth excision of epigraphs}
\label{sec:smooth-epigraph-excision}

Let $\Sigma$ be a manifold, and fix a function $\lambda \colon \Sigma \to (0, 1]$.
We look for a vector field on $\Sigma \times (0, 1)$ of the form
\begin{equation*}
  v(p, x) \deldel{x}
\end{equation*}
whose time-$1$ flow excises the subset.
\begin{equation} \label{eq:Z}
  Z := \Set{(p, x) \in \Sigma \times (0, 1) \mmid x \geq \lambda(p)}
\end{equation}
from $\Sigma \times (0, 1)$.
Strictly speaking, the set $Z$ is not quite an epigraph, because $\lambda$ takes values in $(0, 1]$, whereas $Z$ is a subset of $\Sigma \times (0, 1)$, not $\Sigma \times (0, 1]$.
Nevertheless, as in \cref{lem:semi-cont}, here too a necessary condition is that $\lambda$ be lower semi-continuous:

\begin{lemma} \label{lem:Z-closed}
  If there exists a vector field on $\Sigma \times (0, 1)$ whose time-$1$ flow excises the subset~\cref{eq:Z}, then the function $\lambda \colon \Sigma \to (0, 1]$ is lower semi-continuous.\footnote{\label{fn:to-J}By \cref{lem:semicontinuous-to-interval}, it does not matter if we view $\lambda$ as a function to $(0, 1]$ or as a function to $\R$.}
  \ynote{ \\ We use \cref{lem:Z-closed} in the text above \cref{thm:epigraph-smooth-excision}.}
\end{lemma}

\begin{proof}
  Suppose that there exists a vector field on $\Sigma \times (0, 1)$ whose time-$1$ flow excises the above subset $Z$.
  By \cref{fn:topology}, $Z$ must be closed in $\Sigma \times (0, 1)$.
  Because the complement of $Z$ in $\Sigma \times (0, 1)$ coincides with the complement of the union $Z \cup (\Sigma \times \Set{1})$ in $\Sigma \times (0, 1]$, the union $Z \cup (\Sigma \times \Set{1})$ is closed in $\Sigma \times (0, 1]$.
  But this union is exactly the epigraph of $\lambda \colon \Sigma \to (0, 1]$ in $\Sigma \times (0, 1]$.
  Because this epigraph is closed, by \cref{lem:semi-cont}, the function $\lambda \colon \Sigma \to (0, 1]$ is lower semi-continuous.
\end{proof}

The main result of this section is that the lower semi-continuity of the function $\lambda$ is not only necessary but is also sufficient for excisability of the subset~\cref{eq:Z}.
See \cref{thm:epigraph-smooth-excision}, preceded by the easier result \cref{lem:epigraph-smooth-forward-time}.

\subsection*{The one dimensional case}

We recall the solution theory of an autonomous ordinary differential equation of first order on the interval $(0, 1)$.
Consider a non-negative smooth function $v \colon (0, 1) \to [0, \infty)$ and the vector field $X = v(x) \deldel{x}$.
Its forward exit time is found by Barrow's formula,
\begin{equation*}
  T_X(x) = \begin{dcases}
    \int_x^1 \frac{\der \xi}{v(\xi)} \in (0, \infty] & \stext{if}v(\xi) > 0 \stext{for all} \xi \in [x, 1), \\
    \infty & \stext{if}v(\xi) = 0 \stext{for some} \xi \in [x, 1),
  \end{dcases}
\end{equation*}
and similarly for its backward exit time $S_X$.
In particular, if $v(x) = 0$ or all $x$ close to the left endpoint $0$, then the image of $S_X$ is $\Set{-\infty}$ and the image of $T_X$ is either $\Set{\infty}$ or $(0, \infty]$.
In the latter case, there exists $x_0 \in (0, 1)$ such that $T_X$ is equal to $\infty$ on $(0, x_0]$ and is strictly decreasing on $[x_0, 1)$, and the time-$1$ flow of $X$ excises $[\lambda_0, 1)$ from $(0, 1)$ where $T_X^{-1}(\Set{1}) = \Set{\lambda_0}$.

\subsection*{Epigraphs of smooth functions on closed subsets}

When $\lambda \colon \Sigma \to (0, 1]$ is smooth, excising $Z = \Set{(p, x) \mmid  x \geq \lambda(p)}$ from $\Sigma \times (0, 1)$ is not difficult.
However, this case is too restrictive even for excising rays.
In this subsection we address the more general case $Z = \Set{(p, x) \mmid  x \in C \stext{and} x \geq \lambda_C(p)}$ where $\lambda_C \colon C \to (0, 1]$ is a smooth function on is an arbitrary closed subset $C$ of $\Sigma$.
See \cref{lem:epigraph-smooth-forward-time} and \cref{exm:ray}.

We begin by constructing a model with parameters:

\begin{lemma} \label{lem:one-dim-param}
  Let
  \begin{equation*}
    \Sigma_0 = \Set{(b, c) \mmid 0 < b \leq 1 \stext{and} c \geq 0}.
  \end{equation*}
  There exists a smooth function $u \colon \Sigma_0 \times (0, 1) \to [0, \infty)$ such that the vector field $Y = u(b, c; x) \deldel{x}$ on $\Sigma_0 \times (0, 1)$ has the following properties.
  \begin{itemize}
    \item
      The backward exit time of $Y$ is always $-\infty$:
      \begin{equation} \label{eq:Su-minus-infty}
        S_Y(b, c; x) = -\infty \quad \stext{for all} \quad (b, c; x) \in \Sigma_0 \times (0, 1).
      \end{equation}
    \item
      The forward exit time of $Y$ satisfies
      \begin{equation} \label{eq:Tu-leq-one}
        T_Y(b, c; x) \leq 1 \quad \stext{if and only if} \quad c = 0 \stext{and} x \geq b.
      \end{equation}
  \end{itemize}
  \ynote{We use \cref{lem:one-dim-param} in the proof of \cref{lem:epigraph-smooth-forward-time}.}
\end{lemma}

\begin{proof}
  Define
  \begin{equation} \label{eq:vec-par}
    u(b, c; x) \coloneqq \chi(b; x) (1-b) \frac{1 - x^2}{1 - x^2 + c},
  \end{equation}
  where
  \begin{equation*}
  \begin{split}
    \chi(b; x) &\coloneqq \begin{dcases}
      0 & \stext{if} 0 < x \leq \frac{b}{2}; \\
      \frac{\exp(-\frac{1}{x-b/2})}{\exp(-\frac{1}{x-b/2}) + \exp(-\frac{1}{b-x})} & \stext{if} \frac{b}{2} < x < b; \\
      1 & \stext{if} b \leq x < 1.
    \end{dcases}
  \end{split}
  \end{equation*}
  Because the vector field $Y := u \deldel{x}$ on $\Sigma_0 \times (0, 1)$ is a non-negative multiple of $\deldel{x}$ that vanishes when $0 < x \leq \frac{b}{2}$,
  \begin{equation*}
    S_Y(b, c; x) = -\infty \quad \stext{for all} 0 < b \leq 1, c \geq 0, \stext{and} 0 < x < 1.
  \end{equation*}
  Because $u(b, c; x) = 0$ if $b = 1$ or $0 < x \leq \frac{b}{2}$,
  \begin{equation*}
    T_Y(b, c; x) = \infty \quad \stext{if} b = 1 \stext{or} 0 < x \leq \frac{b}{2}.
  \end{equation*}
  Otherwise, $u(b, c;\xi) > 0$, and
  \begin{equation} \label{eq:one-dim-param-T}
    T_Y(b, c; x) = \int_x^1 \frac{\der \xi}{u(b, c; \xi)} \quad \stext{if} 0 < b < 1 \stext{and} \frac{b}{2} < x < 1.
  \end{equation}
  For $b \leq x <1$,~\cref{eq:vec-par} and~\cref{eq:one-dim-param-T} give
  \begin{multline*}
    T_Y(b, c; x) = \frac{1}{1-b} \int_x^1 \Pa{1 + \frac{c}{1-\xi^2}} \der \xi = \frac{1}{1-b} \left[\xi + \frac{c}{2} \ln\abs{\frac{1+\xi}{1-\xi}}\right]_x^1 = \begin{dcases}
      \frac{1-x}{1-b}, & c = 0 ; \\
      \infty, & c > 0.
    \end{dcases}
  \end{multline*}
  In particular,
  \begin{equation*}
    T_Y(b, c; x) \begin{cases}
      \ > 1 \ & \stext{if} c>0 \stext{and} b \leq x < 1, \\
      \ < 1 \ & \stext{if} c = 0 \stext{and} b < x < 1, \\
      \ = 1 \ & \stext{if} c = 0 \stext{and} x = b < 1.
    \end{cases}
  \end{equation*}
  For $\frac{b}{2} < x < b < 1$,~\cref{eq:one-dim-param-T} gives
  \begin{equation} \label{additive}
    T_Y(b, c; x) = \int_x^b \frac{d \xi}{ u(b, c;\xi) } + T_Y(b, c; b).
  \end{equation}
  Because the integrand in~\cref{additive} is positive and $T_Y(b, c; b) \geq 1$,
  \begin{equation*}
    T_Y(b, c; x) > 1 \quad \stext{if} \frac{b}{2} < x < b < 1.
  \end{equation*}
\end{proof}

\begin{proposition} \label{lem:epigraph-smooth-forward-time}
  Let $\Sigma$ be a smooth manifold, let $C \subseteq \Sigma$ be a closed subset, and let $\lambda_C \colon C \to (0, 1]$ be a smooth function.
  Then there exists a vector field $X$ on $\Sigma \times (0, 1)$, of the form $X = v(p, x) \deldel{x}$ with $0 \leq v(p, x) \leq 1$, whose time-$1$ flow excises the subset
  \begin{equation*}
    \Set{(p, x) \mmid p \stext{in} C \stext{and} \lambda_C(p) \leq x < 1}
  \end{equation*}
  from $\Sigma \times (0, 1)$.
  \ynote{We use \cref{lem:epigraph-smooth-forward-time} in the proof of \cref{prop:exc-epigraph}.}
\end{proposition}

\begin{proof}
  By \cref{lem:time-one-excise}, it is enough to find a vector field of the required form whose forward exit time $T_X$ and backward exit time $S_X$ satisfy
  \begin{equation} \label{eq:excise-condition-T}
    T_X(p, x) \leq 1 \quad \stext{if and only if} \quad p \in C \stext{and} \lambda_C(p) \leq x < 1
  \end{equation}
  and
  \begin{equation} \label{eq:excise-condition-S}
    S_X(p, x) = -\infty \quad \stext{for all} \quad (p, x) \in \Sigma \times (0, 1).
  \end{equation}

  By the definition of a smooth function on a subset of a manifold, and because the subset $C$ of $\Sigma$ is closed, $\lambda_C$ is the restriction to $C$ of a smooth function $b \colon \Sigma \to (0, 1]$.
  Because $C$ is closed, it is the zero locus of a smooth function $c \colon \Sigma \to [0, 1]$; see, e.g., \cite[Theorem~2.29]{MR2954043}.
  We set
  \begin{equation*}
    v(p, x) := u(b(p), c(p); x)
  \end{equation*}
  with the function $u(b, c; x)$ of \cref{lem:one-dim-param}.
  Then for all $(p, x) \in \Sigma\times (0, 1)$, we have $T_X(p, x) = T_Y(b(p), c(p); x)$ and $S_X(p, x) = S_Y(a(p), b(p), c(p); x)$.
  By \cref{eq:Tu-leq-one}, for all $(p, x) \in \Sigma \times (0, 1)$,
  \begin{equation*}
    T_X(p, x) \leq 1 \quad \stext{if and only if} \quad c(p) = 0 \stext{and} b(p) \leq x < 1.
  \end{equation*}
  Because $c(p) = 0$ iff $p \in C$, and because for all such $p$ we have $b(p) = \lambda_C(p)$, we obtain~\cref{eq:excise-condition-T}.
  By \cref{eq:Su-minus-infty}, we obtain~\cref{eq:excise-condition-S}.
\end{proof}

\subsection*{Epigraphs of lower semi-continuous functions}

The set that we excise in \cref{lem:epigraph-smooth-forward-time} can also be expressed as
\begin{equation*}
  \Set{(p, x) \in \Sigma \times (0, 1) \mmid x \geq \lambda(p)},
\end{equation*}
where $\lambda \colon \Sigma \to (0, 1]$ is the lower semi-continuous function
\begin{equation*}
  \lambda := \begin{cases} \lambda_C & \stext{on} C \\
  1 & \stext{on} \Sigma \setminus C \end{cases}
\end{equation*}
with $\lambda_C$ smooth on $C$.
Our next goal is to excise similar sets for \emph{arbitrary} lower semi-continuous functions.
We will use the following smooth version of a theorem of Baire (see~\cite{MR1400223}).

\begin{lemma} \label{lem:smooth-Baire}
  Let $\Sigma$ be a smooth manifold, and let $\lambda \colon \Sigma \to (0, 1]$ be a lower semi-continuous function.
  Then there exists a strictly increasing sequence of positive smooth functions $f_n \colon \Sigma \to [0, 1)$, for $n \in \N$, whose supremum is~$\lambda$.
  \ynote{ \\ We use \cref{lem:smooth-Baire} in the proof of \cref{thm:epigraph-smooth-excision}.}
\end{lemma}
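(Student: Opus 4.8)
The plan is to reduce the problem to a genuinely local statement and then patch. First I would work in coordinate charts: choose a locally finite cover of $B$ by coordinate balls $(V_\alpha)$ together with a subordinate partition of unity $(\rho_\alpha)$ with $\sum_\alpha \rho_\alpha \equiv 1$. Since lower semi-continuity is a local property and the supremum of the smooth functions we build will be taken pointwise, the real content is a single chart, i.e. the case $B$ an open subset of $\mathbb{R}^d$ (or even $B = \mathbb{R}^d$). The classical theorem of Baire gives that a lower semi-continuous function $\lambda\colon B \to (0,1]$ is the pointwise supremum of an increasing sequence of \emph{continuous} functions; what I need is the smooth refinement, and the standard trick is mollification.

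Next, starting from Baire's continuous approximants $g_n \nearrow \lambda$ (which I may take to satisfy $0 < g_n \le 1$ after truncating), I would smooth each $g_n$ by convolution with a mollifier at a small enough scale. The subtlety is that convolution can overshoot $\lambda$, so I cannot naively set $\lambda_n = g_n * \phi_{\delta_n}$. Instead I would fix the enumeration and work inductively: at stage $n$, having constructed $\lambda_1 < \dots < \lambda_{n-1}$ smooth and positive with $\lambda_{n-1} < \lambda$ pointwise, I choose a continuous $g$ with $\max(\lambda_{n-1}, g_n) \le g \le \lambda$ (possible since $\lambda_{n-1} < \lambda$ and $g_n \le \lambda$ and $\max$ of l.s.c.\ below $\lambda$ is still below $\lambda$ and l.s.c., then re-approximate), and on each chart convolve $g$ at a scale fine enough that the mollified function stays both above $\lambda_{n-1}$ (by uniform continuity of the continuous function $g - \lambda_{n-1}$ on compacta, using local finiteness) and strictly below $\lambda$ at every point — the latter uses that $g$ is strictly below $\lambda$ in a quantitative, locally-controlled way, or rather that $\lambda - g \ge 0$ with $\lambda$ l.s.c., so on a small neighbourhood of any point one can keep the mollified value below $\lambda$ at that point by exploiting l.s.c.\ (this is exactly where care is needed). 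Gluing the chart constructions via the partition of unity produces a global smooth $\lambda_n$ with $\lambda_{n-1} < \lambda_n < \lambda$; the pointwise supremum is then squeezed between $\sup g_n = \lambda$ and $\lambda$, hence equals $\lambda$.

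The main obstacle I anticipate is precisely the two-sided control in the mollification step: I must keep each $\lambda_n$ \emph{strictly below} $\lambda$ (so that the next stage has room to work and so the sequence is genuinely increasing to, not overshooting, $\lambda$) while also forcing it \emph{above} the previous $\lambda_{n-1}$ and above $g_n$. Strictness below $\lambda$ is the delicate half because $\lambda$ is only l.s.c.: one cannot uniformly bound $\lambda$ from above, so the mollification scale must be chosen pointwise and then made into a smooth positive function of the base point (a standard but fiddly manoeuvre — e.g.\ define the admissible scale as an l.s.c.\ function bounded below by a smooth positive function). Once that is handled, extracting the strictly increasing subsequence is cosmetic: given any sequence with supremum $\lambda$ and each term $< \lambda$, pass to $\max(\lambda_1,\dots,\lambda_n) + $ a tiny smooth positive bump if needed, or simply note the inductive construction already delivers strict monotonicity by fiat.

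I would also remark that the hypothesis $\lambda \le 1$ is merely for convenience (to keep everything bounded so the $\lambda_n$ can be taken bounded) and plays no essential role, and that the positivity $\lambda > 0$ is what lets every $\lambda_n$ be taken positive, which is the form needed downstream when these $\lambda_n$ are fed into \cref{lem:flow-domain} as (reparametrised) values in $(-1,1]$.
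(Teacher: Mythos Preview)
Your plan is workable but takes a different, more circuitous route than the paper. You invoke the classical Baire theorem to get continuous approximants and then mollify with a pointwise-varying scale, correctly flagging the delicate two-sided control (stay strictly below the l.s.c.\ $\lambda$ while staying above the previous $\lambda_{n-1}$). The paper instead builds the smooth approximants directly, by the same mechanism that proves Baire's theorem itself: at stage $n$, around each point $x$ it picks a \emph{constant} $c(x,n)$ with $\max\{\lambda_{n-1}(x),\,(1-\tfrac1n)\lambda(x)\} < c(x,n) < \lambda(x)$, lets $U(x,n)$ be a ball of radius $<\tfrac1n$ on which both inequalities $\lambda > c(x,n) > \lambda_{n-1}$ persist (open by lower semi-continuity of $\lambda$ and continuity of $\lambda_{n-1}$), and sets $\lambda_n = \sum_i \rho_{n,i}\, c(x_i,n)$ for a smooth partition of unity subordinate to the cover $\{U(x_i,n)\}$. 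Since $\lambda_n$ is locally a convex combination of constants each satisfying the sandwich on its support, the bound $\lambda_{n-1} < \lambda_n < \lambda$ is automatic---no mollification, no scale to tune---and the side condition $c(x,n) > (1-\tfrac1n)\lambda(x)$ together with the shrinking radii forces $\sup_n\lambda_n = \lambda$. Your mollification scheme can be completed, but note two small wrinkles in your sketch: the function $\max(\lambda_{n-1},g_n)$ is already continuous, so the ``re-approximate'' clause is superfluous; and you must also keep $\lambda_n$ close enough to $g_n$ (not merely below $\lambda$) so that the supremum actually attains $\lambda$, which is yet another constraint on the mollification scale that the paper's approach sidesteps entirely.
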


\begin{proof}
  Fix a metric $d(\cdot, \cdot)$ on $\Sigma$.
  We will construct a strictly increasing sequence of smooth functions
  \begin{equation*}
    f_n \colon \Sigma \to [0, 1)
  \end{equation*}
  that have the following property.
  For each $n \in \N$ and $y \in \Sigma$,
  \begin{equation} \label{eq:property-lambda} \textstyle
    \lambda(y) > f_n(y) \geq (1 - \frac{1}{n}) \inf \Set{\lambda(x) \mmid d(x, y) < \frac{1}{n}}.
  \end{equation}

  This property implies that the supremum of the sequence is $\lambda$.
  Indeed, fix $y \in \Sigma$, and let $\eps > 0$.
  Because $\lambda(y)$ is positive, the set
  \begin{equation*} \textstyle
    \Set{x \mmid \lambda(x) > (1 - \frac{\eps}{2}) \lambda(y)}
  \end{equation*}
  contains $y$; because the function $\lambda$ is lower semi-continuous, this set is a neighbourhood of $y$ in $\Sigma$.
  Let $N \in \N$ be such that $d(x, y) < \frac{1}{N}$ implies that $x$ is in this neighbourhood and such that $\frac{1}{N} < \frac{\eps}{2}$.
  Then, for any $n \geq N$,
  \begin{equation*} \textstyle
    \lambda(y) > f_n(y) \geq f_N(y) \geq (1 - \frac{1}{N}) \inf \Set{\lambda(x) \mmid d(x, y) < \frac{1}{N}} \geq (1 - \frac{\eps}{2})^2 \lambda(y) > (1 - \eps) \lambda(y).
  \end{equation*}
  Since $\eps > 0$ is arbitrary, this shows that $\lim\limits_{n \to \infty} f_n(y) = \lambda(y)$.

  To construct such a sequence $\{f_n\}$, we start with $n = 0$ and $f_0 \equiv 0$, and we continue recursively.
  Let $n \geq 1$, and let $f_{n-1} \colon \Sigma \to [0, 1)$ be a non-negative smooth function such that $\lambda(y) > f_{n-1}(y)$ for all $y \in \Sigma$.
  We will construct a smooth function $f_n \colon \Sigma \to [0, 1)$ that satisfies \cref{eq:property-lambda} and such that $f_n(y) > f_{n-1}(y)$ for all $y \in \Sigma$.

  For each $x \in \Sigma$, let $c(x, n)$ be a real number such that
  \begin{equation*} \textstyle
    \lambda(x) > c(x, n) > \max \Set{f_{n-1}(x), (1 - \frac{1}{n})\lambda(x)},
  \end{equation*}
  and let
  \begin{equation*} \textstyle
    U(x, n) := \Set{y \in \Sigma \mmid d(y, x) < \frac{1}{n} \stext{and} \lambda(y) > c(x, n) > f_{n-1}(y)}.
  \end{equation*}
  Because $\lambda(x) > c(x, n) > f_{n-1}(x)$ (by the choice of $c(x, n)$), the set $U(x, n)$ contains~$x$; because $f_{n-1}$ is (smooth, hence) continuous and $\lambda$ is lower semi-continuous, the set $U(x, n)$ is open.
  So $U(x, n)$ is a neighbourhood of~$x$ in $\Sigma$.

  For each $x \in \Sigma$ and $y \in U(x, n)$,
  \begin{equation} \label{eq:squeeze-lambda-n-c} \textstyle
    \lambda(y) > c(x, n) > f_{n-1}(y).
  \end{equation}
  Also, because $c(x, n) > (1-\frac{1}{n}) \lambda(x)$ (by the choice of $c(x, n)$) and $d(y, x) < \frac{1}{n}$ (because $y \in U(x, n)$),
  \begin{equation} \label{eq:squeeze-c-inf-lambda} \textstyle
    c(x, n) > (1 - \frac{1}{n}) \inf \Set{\lambda(z) \mmid d(y, z) < \frac{1}{n}}.
  \end{equation}

  Let $(\rho_{n, i})_{i \geq 1}$ be a smooth partition of unity with $\supp \rho_{n, i} \subseteq U(x_i, n)$ for all $i$.
  Define
  \begin{equation*}
    f_n(y) := \sum_{i \geq 1} \rho_{n, i}(y) c(x_i, n).
  \end{equation*}
  Then $f_n$ is smooth.
  For each $y \in \Sigma$, from the inequalities \cref{eq:squeeze-lambda-n-c,eq:squeeze-c-inf-lambda}, we obtain
  \begin{equation*} \textstyle
    \lambda(y) > f_n(y) > f_{n-1}(y) \quad \stext{and} \quad f_n(y) > (1 - \frac{1}{n}) \inf \Set{\lambda(z) \mmid d(y, z) < \frac{1}{n}}.
  \end{equation*}
  So $f_n > f_{n-1}$, and $f_n$ satisfies \cref{eq:property-lambda}.
\end{proof}

We now spell out an easy lemma:

\begin{lemma} \label{lem:g-between-one-f}
  Let $\Sigma$ be a manifold, and let $f \colon \Sigma \to (0, 1)$ be a continuous function.
  Then there exists a smooth function $g \colon \Sigma \to (0, 1)$ such that $f < g < 1$.
  \ynote{ \\ We use \cref{lem:g-between-one-f} in the proof of \cref{thm:epigraph-smooth-excision}.}
\end{lemma}

\begin{proof}
  For each $x \in \Sigma$, let $c(x)$ be a real number such that $f(x) < c(x) < 1$, and let $U(x) := \Set{y \in \Sigma \mmid f(y) < c(x)}$.
  By the choice of $c(x)$, the set $U(x)$ contains $x$; because $f$ is continuous, the set $U(x)$ is open.
  So $U(x)$ is a neighbourhood of $x$ in~$\Sigma$.
  Let $(\rho_i)_{i \geq 1}$ be a smooth partition of unity with $\supp \rho_i \subseteq U(x_i)$.
  Take
  \begin{equation*}
    g(y) := \sum_{i \geq 1} \rho_i(y) c(x_i).
  \end{equation*}
\end{proof}

We continue with a few preparatory lemmas.

\begin{lemma} \label{lem:u-a-b-tau}
  Let $\Sigma'_0 := \Set{(a, b, \tau) \in \R^3 \mmid 0 < a < b < 1 \stext{and} \tau > 0}$.
  There exists a smooth function
  \begin{equation*}
    u' \colon \Sigma'_0 \times (0, 1) \to (0, 1],
  \end{equation*}
  such that the following conditions hold.
  \begin{itemize}[topsep = 4pt, itemsep = 4pt]
    \item If $(a, b, \tau) \in \Sigma'_0$ and $x \notin (a, b)$, then $u'(a, b, \tau; x) = 1$.
    \item For each $(a, b, \tau) \in \Sigma'_0$, the flow of the vector field $u'(a, b, \tau; x) \deldel{x}$ on $(0, 1)$ takes the point $a$ to the point $b$ in time $b - a + \tau$.
  \end{itemize}
  \ynote{We use \cref{lem:u-a-b-tau} in the proof of \cref{thm:epigraph-smooth-excision}.}
\end{lemma}

\begin{proof}
  Let
  \begin{equation*}
    u'(a, b, \tau; x) := \dfrac{\frac{b-a}{2} \int_{-1}^1 \exp\Pa{-\frac{2}{1-\xi^2}} \der \xi}{\frac{b-a}{2} \int_{-1}^1 \exp\Pa{-\frac{2}{1-\xi^2}} \der \xi + \tau \exp\Pa{-\frac{(b- a)^2}{2(x - a)(b - x)}}}
  \end{equation*}
  for $x \in (a, b)$, and $u'(a, b, \tau; x) := 1$ for $x \not\in (a, b)$.
  By Barrow's formula, the flow of the vector field $u'(a, b, \tau; x) \deldel{x}$ takes $a$ to $b$ in time
  \begin{equation*}
    \int_a^b \frac{\der \xi}{u'(a, b, \tau; \xi)} = b - a + \dfrac{\tau \int_{a}^{b} \exp\Pa{-\frac{(b-a)^2}{2(\xi-a)(b-\xi)}} \der \xi}{\frac{b-a}{2} \int_{-1}^1 \exp\Pa{-\frac{2}{1-\xi^2}} \der \xi}.
  \end{equation*}
  With the change of variable $\xi = \frac{a+b}{2} + \eta \frac{b-a}{2}$, this becomes
  \begin{equation*}
    \ldots = b - a + \dfrac{\tau \, {\int_{-1}^1\exp\Pa{-\frac{2}{1-\eta^2}}}\frac{b-a}{2} \der \eta}{\frac{b-a}{2} \int_{-1}^1 \exp\Pa{-\frac{2}{1-\xi^2}} \der \xi} = b - a + \tau.
  \end{equation*}
\end{proof}

\begin{lemma} \label{lem:adjust-time-1}
  Let $\Sigma$ be a manifold.
  Let $f, a, b$ be real valued smooth functions on $\Sigma$ such that
  \begin{equation*}
    0 < f < a < b < 1.
  \end{equation*}
  Then there exists a vector field on $\Sigma \times (0, 1)$ of the form
  \begin{equation*}
    X = v(p, x) \deldel{x},
  \end{equation*}
  with $0 < v(p, x) \leq 1$, with the following properties.
  \begin{itemize}
    \item
      The forward exit time $T_X$ of $X$ satisfies
      \begin{equation*}
        T_X(p, x) \leq 1 \quad \stext{if and only if} \quad x \geq f(p).
      \end{equation*}
    \item
      The vector field $X$ coincides with $\deldel{x}$ on some neighbourhood of the set
      \begin{equation*}
        \Set{(p, x) \mmid f(p) \leq x \leq a(p) \stext{or} x \geq b(p)}.
      \end{equation*}
  \end{itemize}
  \ynote{We use \cref{lem:adjust-time-1} in the proof of \cref{thm:epigraph-smooth-excision}.}
\end{lemma}

\begin{proof}
  Let $a_1, b_1$ be real valued smooth functions on $\Sigma$ such that $a < a_1 < b_1 < b$.
  For example, we can take $a_1 = \frac23 a + \frac13 b$ and $b_1 = \frac13 a + \frac23 b$.
  Take
  \begin{equation*}
    v(p, x) := u'(a_1(p), b_1(p), f(p)); x),
  \end{equation*}
  where $u'$ is the function from \cref{lem:u-a-b-tau}.
\end{proof}

\begin{lemma} \label{lem:adjust-time-2}
  Let $\Sigma$ be a manifold.
  Let $f, h, a, b$ be real valued smooth functions on~$\Sigma$ such that
  \begin{equation*}
  0 < f < h < a < b < 1.
  \end{equation*}
  Let $X'$ be a vector field on $\Sigma \times (0, 1)$ of the form
  \begin{equation*}
    X' = v'(p, x) \deldel{x},
  \end{equation*}
  with $0 < v'(p, x) \leq 1$, with the following properties.
  \begin{itemize}
    \item
      The forward exit time $T_{X'}$ of $X'$ satisfies
      \begin{equation*}
        T_{X'}(p, x) \leq 1 \quad \stext{if and only if} \quad x \geq f(p).
      \end{equation*}
    \item
      The vector field $X'$ coincides with $\deldel{x}$ on some neighbourhood of the set $\Set{(p, x) \mmid x \geq a(p)}$.
  \end{itemize}
  Then there exists a vector field $X$ on $\Sigma \times (0, 1)$ of the form
  \begin{equation*}
    X = v(p, x) \deldel{x},
  \end{equation*}
  with $v'(p, x) \geq v(p, x) > 0$, with the following properties.
  \begin{itemize}
    \item
      The forward exit time $T_X$ of $X$ satisfies
      \begin{equation*}
        T_{X}(p, x) \leq 1 \quad \stext{if and only if} \quad x \geq h(p).
      \end{equation*}
    \item
      The vector field $X$ coincides with $X'$ on some neighbourhood of the set $\Set{(p, x) \mmid 0 < x \leq a(p)}$ and coincides with $\deldel{x}$ on some neighbourhood of the set $\Set{(p, x) \mmid x \geq b(p)}$.
  \end{itemize}
  \ynote{We use \cref{lem:adjust-time-2} in the proof of \cref{thm:epigraph-smooth-excision}.}
\end{lemma}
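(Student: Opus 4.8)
The plan is to leave $X'$ unchanged on $\{0<x\le b(p)\}$ and, on an interval lying strictly inside $(b(p),c(p))$ in the $x$-direction, to replace it by a slower vector field that inserts into each forward trajectory an extra travel time of a prescribed amount $\delta(p)$. Then every trajectory starting at $x\le b(p)$ reaches $x=1$ exactly $\delta(p)$ later than it did under $X'$, so $T_X(p,x)=T_{X'}(p,x)+\delta(p)$ there; since $h(p)<b(p)$, the choice $\delta(p):=1-T_{X'}(p,h(p))$ converts the threshold $\{x\ge f(p)\}$ into the threshold $\{x\ge h(p)\}$. Trajectories starting at $x\ge b(p)$ will still escape in time $<1$, which is consistent with $x\ge b(p)>h(p)$.

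Concretely, by Barrow's formula $T_{X'}(p,x)=\int_x^1\der\xi/v'(p,\xi)$, which is finite (as $v'\equiv 1$ near $x=1$) and strictly decreasing in $x$; I then set
\begin{equation*}
  \delta(p) := 1 - T_{X'}(p,h(p)) = b(p) - \int_{h(p)}^{b(p)}\frac{\der\xi}{v'(p,\xi)} ,
\end{equation*}
which is smooth in $p$. It is positive because strict monotonicity of $T_{X'}(p,\cdot)$ together with $h>f$ gives $T_{X'}(p,h(p))<T_{X'}(p,f(p))\le 1$, and it satisfies $\delta(p)\le h(p)<b(p)$ because $v'\le 1$ forces $\int_{h(p)}^{b(p)}\der\xi/v'(p,\xi)\ge b(p)-h(p)$. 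Now choose smooth functions $a_1,b_1\colon B\to(0,1)$ with $b<a_1<b_1<c$ (for instance $a_1=\tfrac{2b+c}{3}$, $b_1=\tfrac{b+2c}{3}$), and define
\begin{equation*}
  v(p,x) := \begin{cases} v'(p,x), & x\le b(p), \\ u_{a_1(p),b_1(p),\delta(p)}(x), & x\ge b(p), \end{cases}
\end{equation*}
with $u_{a,b,\tau}$ the family from \cref{lem:u-a-b-tau}, and set $X:=v(p,x)\deldel{x}$.

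Both branches of $v$ are identically $1$ on the open set $\{x\ge b(p)\}\cap\{x<a_1(p)\}$, a neighbourhood of the graph $\{x=b(p)\}$ --- the first because $v'\equiv 1$ near $\{x\ge b(p)\}$, the second because $u_{a_1(p),b_1(p),\delta(p)}$ equals $1$ off $(a_1(p),b_1(p))$ --- so $v$ is smooth on all of $B\times(0,1)$; it obeys $0<v\le 1$ since both branches do; it obeys $v\le v'$ since $v'\equiv 1$ on $\{x\ge b(p)\}$; it coincides with $v'$ on the neighbourhood $\{x<a_1(p)\}$ of $\{0<x\le b(p)\}$; and it equals $1$ on the neighbourhood $\{x>b_1(p)\}$ of $\{x\ge c(p)\}$, i.e.\ $X=\deldel{x}$ there. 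For the time function, $u_{a_1(p),b_1(p),\delta(p)}\equiv 1$ off $(a_1(p),b_1(p))$ and, by \cref{lem:u-a-b-tau} and Barrow's formula, its flow runs from $a_1(p)$ to $b_1(p)$ in time $b_1(p)-a_1(p)+\delta(p)$; hence the $X$-flow runs from $b(p)$ to $1$ in time $\big(a_1(p)-b(p)\big)+\big(b_1(p)-a_1(p)+\delta(p)\big)+\big(1-b_1(p)\big)=1-b(p)+\delta(p)$. Therefore, for $x\le b(p)$, where $X=X'$, one gets $T_X(p,x)=T_{X'}(p,x)+\delta(p)$, which by monotonicity of $T_{X'}(p,\cdot)$ and the definition of $\delta$ is $\le 1$ exactly when $x\ge h(p)$; and for $x\ge b(p)$ one gets $T_X(p,x)\le T_X(p,b(p))=1-b(p)+\delta(p)<1$, in accordance with $x\ge b(p)>h(p)$. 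So $T_X(p,x)\le 1$ if and only if $x\ge h(p)$, and the lemma follows.

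The step needing the most care is the construction of $v$ at the seam $\{x=b(p)\}$: $v$ must be genuinely smooth there, and $X$ must agree with $X'$, resp.\ with $\deldel{x}$, on honest open neighbourhoods of $\{0<x\le b(p)\}$, resp.\ $\{x\ge c(p)\}$, not merely pointwise or to infinite order. This is precisely why the delay is inserted on the shrunken interval $(a_1(p),b_1(p))$ lying strictly inside $(b(p),c(p))$, where $u_{a,b,\tau}$ is literally $\equiv 1$ near the endpoints, rather than using $u_{b(p),c(p),\delta(p)}$ directly.
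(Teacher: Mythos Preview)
Your proof is correct and follows the same strategy as the paper: compute the required extra delay $\delta(p)=1-T_{X'}(p,h(p))$ (the paper writes it as $\tau(p)=\int_{f(p)}^{h(p)}d\xi/v'(p,\xi)$, which equals your $\delta(p)$ since $T_{X'}(p,f(p))=1$) and then slow the field down past $x=b(p)$ using the family $u_{\cdot,\cdot,\tau}$ from \cref{lem:u-a-b-tau}.

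The one difference is that the paper places the slowdown on the full interval $(b(p),c(p))$, taking $v=u_{b(p),c(p),\tau(p)}$ there, whereas you place it on a strictly smaller interval $(a_1(p),b_1(p))\subset(b(p),c(p))$. Your version is in fact the more careful one: because $u_{b(p),c(p),\tau(p)}<1$ throughout the open interval $(b(p),c(p))$, the paper's $X$ agrees with $X'$ only on the closed set $\{x\le b(p)\}$ and with $\deldel{x}$ only on $\{x\ge c(p)\}$, not on \emph{open} neighbourhoods of these sets as the lemma statement literally demands. Your shrunken interval buys honest open neighbourhoods, which is exactly the hypothesis one needs to feed back in when iterating the lemma in the proof of \cref{lem:lower-semi-continuous}.
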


\begin{proof}
  By Barrow's formula, the flow of $X'$ takes each point $(p, f(p))$ to the point $(p, h(p))$ in time
  \begin{equation*}
    \tau(p) := \int_{f(p)}^{h(p)} \frac{\der x}{v'(p, x)}.
  \end{equation*}
  The function $\tau \colon \Sigma \to (0, 1)$ is smooth.
  The forward exit time $T_{X'}$ of $X'$ satisfies
  \begin{equation*}
    T_{X'}(p, h(p)) = 1 - \tau(p).
  \end{equation*}
  Let $a_1, b_1$ be real valued smooth functions on $\Sigma$ such that $a < a_1 < b_1 < b$.
  For example, we can take $a_1 = \frac23 a + \frac13 b$ and $b_1 = \frac13 a + \frac23 b$.
  Take
  \begin{equation*}
    v(p, x) :=
    \begin{cases}
      v'(p, x) & \stext{if} 0 < x \leq a_1(p), \\
      u'(a_1(p), b_1(p), \tau(p); x) & \stext{if} a_1(p) < x \leq b_1(p), \\
      1 & \stext{if} b_1(p) < x < 1,
    \end{cases}
  \end{equation*}
  where $u'$ is as in \cref{lem:u-a-b-tau}.
\end{proof}

We now use these results to excise subsets of $\Sigma \times (0, 1)$ of the form $\Set{(p, x) \mmid x \geq \lambda(p)}$ for functions $\lambda \colon \Sigma \to (0, 1]$ that are not necessarily smooth.
Recall from \cref{lem:Z-closed} that such a $\lambda$ must be lower semi-continuous.

\begin{theorem} \label{thm:epigraph-smooth-excision}
  Let $\Sigma$ be a manifold and $\lambda \colon \Sigma \to (0, 1]$ a lower semi-continuous function.
  Then there exists a vector field $X$ on $\Sigma \times (0, 1)$ of the form
  \begin{equation*}
    v(p, x) \deldel{x},
  \end{equation*}
  with $0 \leq v(p, x) \leq 1$, whose time-$1$ flow excises the subset
  \begin{equation*}
    Z := \Set{(p, x) \in \Sigma \times (0, 1) \mmid x \geq \lambda(p)}
  \end{equation*}
  from $\Sigma \times (0, 1)$.
  \ynote{We use \cref{thm:epigraph-smooth-excision}in the proof of \cref{cor:epigraph-hamiltonian-excision}.}
\end{theorem}

\begin{proof}
  By \cref{lem:time-one-excise}, it is enough to find a vector field of the required form whose forward exit time $T_X$ and backward exit time $S_X$ satisfy
  \begin{equation*}
    T_X(p, x) \leq 1 \quad \stext{if and only if} \quad x \geq \lambda(p)
  \end{equation*}
  and
  \begin{equation*}
    S_X = -\infty \quad \stext{everywhere.}
  \end{equation*}

  \cref{lem:smooth-Baire} implies that there exists a strictly increasing sequence of positive smooth functions
  \begin{equation*}
    f_n \colon \Sigma \to (0, 1) \quad \stext{for} n \in \N, \quad 0 < f_1 < f_2 < f_3 < \ldots,
  \end{equation*}
  such that
  \begin{equation*}
    \lambda = \sup\limits_{n \in \N} f_n.
  \end{equation*}

  There also exists a strictly increasing sequence of smooth functions
  \begin{equation*}
    g_n \colon \Sigma \to (0, 1), \quad g_0 < g_1 < g_2 < g_3 < \ldots
  \end{equation*}
  such that
  \begin{equation*}
    f_n < g_{n-1} \quad \stext{for all} n \in \N
  \end{equation*}
  and
  \begin{equation*}
    \sup\limits_{n \in \N} g_n \equiv 1.
  \end{equation*}
  Indeed, for $n = 0$, apply \cref{lem:g-between-one-f} with the continuous function $f_1$ to find a function $g_0 \colon \Sigma \to (0, 1)$ such that $f_1 < g_0 < 1$.
  Arguing recursively, given a smooth function $g_{n-1} \colon \Sigma \to (0, 1)$, apply \cref{lem:g-between-one-f} with the continuous function $\max \Set{g_{n-1}, f_{n+1}, 1 - \frac{1}{n}}$ to find a function $g_n \colon \Sigma \to (0, 1)$ such that $1 > g_n > \max \Set{g_{n-1}, f_{n+1}, 1 - \frac{1}{n}}$.

  We will now construct, recursively, a sequence of smooth vector fields $X_n$ on $\Sigma \times (0, 1)$ of the form
  \begin{equation*}
    X_n = v_n(p, x) \deldel{x},
  \end{equation*}
  parametrized by $n \in \N$, with the following properties.
  \begin{itemize}
    \item[(i)]
      For all $n \geq 2$ and all $p, x$,
      \begin{equation*}
        v_{n-1}(p, x) \geq v_n(p, x)  >0.
      \end{equation*}

    \item[(ii)]
      The forward exit time $T_{X_n}$ of each $X_n$ satisfies
      \begin{equation*}
        T_{X_n} (p, x) \leq 1 \quad \stext{if and only if} \quad x \geq f_n(p).
      \end{equation*}

    \item[(iii)]
      For each $n \geq 2$, the function $v_n$ coincides with the function $v_{n-1}$ on a neighbourhood of the set $\Set{(p, x) \mmid x \leq g_{n-1}(p)}$ and is equal to $1$ on a neighbourhood of the set $\Set{(p, x) \mmid x \geq g_{n}(p)}$.
  \end{itemize}
  Here is the construction.
  We obtain $X_1$ by applying \cref{lem:adjust-time-1} to the functions $f = f_1$, $a = g_0$, $b = g_1$.
  Given $X_{n-1}$, we obtain $X_n$ by applying \cref{lem:adjust-time-2} to the vector field $X' := X_{n-1}$ and to the functions $f := f_{n-1}$, $h:=f_n$, $a:=g_{n-1}$, $b:=g_n$.

  Because $\sup g_n = 1$, the union of the open sets $\Set{(x, p) \mmid x < g_n(p)}$ is all of $\Sigma \times (0, 1)$.
  By this and Property (iii), there exists a unique smooth function $v_\infty(p, x)$ on $\Sigma \times (0, 1)$ that, for each $n$, coincides with $v_n$ on $\Set{(x, p) \mmid x < g_n(p)}$.
  We define
  \begin{equation*}
    X_\infty := v_\infty(p, x) \deldel{x}.
  \end{equation*}

  We now show that, for each $(p, x) \in \Sigma \times (0, 1)$,
  \begin{equation*}
    T_{X_\infty}(p, x) > 1 \quad \stext{if and only if} \quad x < \lambda(p).
  \end{equation*}

  First, suppose that $x < \lambda(p)$.
  Because $\sup f_n = \lambda$, there exists an $n$ such that $x < f_n(p)$.
  Fix such an $n$.
  By Property (ii), $T_{X_n}(p, x) > 1$.
  Property (i) implies that $v_n(p, \cdot) \geq v_\infty(p, \cdot) > 0$ on $[x, 1)$, and so $T_{X_\infty}(p, x) \geq T_{X_n}(p, x)$.
  Putting these together, we obtain $T_{X_\infty}(p, x) > 1$.

  Now, suppose that $T_{X_\infty}(p, x) > 1$.
  Let $(p, y)$ be the image of $(p, x)$ under the time-$1$ flow of $X_\infty$.
  Because $\sup g_n = 1$ and $y < 1$, there exists an $n$ such that $y < g_n(p)$.
  Fix such an $n$.
  Because on the set $\Set{(p, x) \mmid x < g_n(p)}$ the vector field $X_\infty$ coincides with $X_n$, the time-$1$ flow of $X_n$ also takes $(p, x)$ to $(p, y)$.
  So $T_{X_n}(p, x) > 1$.
  By Property (ii), $x < f_n(p)$.
  This implies that $x < \lambda(p)$.

  To finish, take
  \begin{equation*}
    X := \chi(p, x) X_\infty
  \end{equation*}
  where $\chi(p, x) = 0$ if $0 < x \leq \frac12 f_1(p)$ and $\chi(p, x) = 1$ if $x \geq f_1(p)$.
\end{proof}

\section{Examples}
\label{sec:example}

Combining the results of Sections \ref{sec:smooth-to-hamiltonian} and~\ref{sec:smooth-epigraph-excision} yield symplectic excisions of sets with complicated topology.

\begin{proposition} \label{prop:exc-epigraph}
  Let $C$ be a closed subset of $\R^{2n-2}$.
  Then there exists a smooth real-valued function on $(\R^{2n}, \omegacan)$ whose time-$1$ Hamiltonian flow excises
  \begin{equation*}
    Z := C \times [0, \infty) \times \Set{0}
  \end{equation*}
  from $(\R^{2n}, \omegacan)$.
  \ynote{We use \cref{prop:exc-epigraph} in the proofs of \cref{exm:ray,exm:Cantor-brush}.}
\end{proposition}

\begin{proof}
  Applying \cref{lem:epigraph-smooth-forward-time} to the function $\lambda_C \equiv \frac12$, we find a vector field on $\R^{2n-2} \times (0, 1)$ of the form $v(p, x_n) \deldel{x_n}$ whose time-$1$ flow excises the subset $C \times [\frac12, 1)$ from $\R^{2n-2} \times (0, 1)$.
  Next, let $N \subset \R^{2n}$ be given by the vanishing of the $y_n$-coordinate.
  Using an order preserving diffeomorphism $(0, 1) \to \R$ that takes $[\frac12,1)$ to $[0, \infty)$, we obtain a null vector field on $N$ whose time-$1$ flow excises the subset $C \times [0, \infty) \times \Set{0}$ from $N$.
  To finish, apply \cref{thm:extension-vector-field-germ}.
\end{proof}

Now, we recover the excision of a ray:

\begin{example}[Ray] \label{exm:ray}
  There is a smooth real-valued function on $(\R^{2n}, \omegacan)$ whose time-$1$ Hamiltonian flow excises
  \begin{equation*}
    Z := \Set{0}^{2n-2} \times [0, \infty) \times \Set{0}
  \end{equation*}
  from $(\R^{2n}, \omegacan)$.
\end{example}

\begin{proof}
  This follows from \cref{prop:exc-epigraph} by taking $C = \Set{0}^{2n-2}$.
\end{proof}

More interestingly, we excise a ``Cantor brush'':

\begin{example}[Cantor brush] \label{exm:Cantor-brush}
  Let $K \subset \R$ be the cantor set.
  Then there is a smooth real-valued function on $(\R^{2n}, \omegacan)$ whose time-$1$ Hamiltonian flow excises
  \begin{equation*}
    Z := K \times \Set{0}^{2n-3} \times [0, \infty) \times \Set{0}
  \end{equation*}
  from $(\R^{2n}, \omegacan)$.
  See \cref{fig:Cantor-brush}.
\end{example}

\begin{proof}
  This follows from \cref{prop:exc-epigraph} by taking $C = K \times \Set{0}^{2n-3}$.
\end{proof}

\begin{figure}[htb]
  \centering
  \inputfigure{excision-c}
  \caption{Removing the Cantor brush.}
  \label{fig:Cantor-brush}
\end{figure}

\begin{proposition} \label{cor:epigraph-hamiltonian-excision}
  Let $\lambda \colon \R^{2n-2} \to (-\infty, \infty]$ be a lower semi-continuous function.
  Then there is a smooth real-valued function on $(\R^{2n}, \omegacan)$ whose time-$1$ Hamiltonian flow excises
  \begin{equation*}
    Z := \Set{(p; x_n, y_n) \in \R^{2n-2} \times \R^2 \mmid x_n \geq \lambda(p) \stext{and} y_n = 0}
  \end{equation*}
  from $(\R^{2n}, \omegacan)$.
  \ynote{We use \cref{cor:epigraph-hamiltonian-excision} in the proofs of \cref{exm:box-w-tail,exm:angle}.}
\end{proposition}

\begin{proof}
  Combining \cref{thm:epigraph-smooth-excision} with an order-preserving diffeomorphism $(0, 1) \to \R$, we obtain a vector field on $\R^{2n-1}$ that is a multiple of $\deldel{x_n}$, whose time-$1$ flow excises
  \begin{equation*}
    Z_0 := \Set{(p, x_n) \in \R^{2n-2} \times \R \mmid x_n \geq \lambda(p)}
  \end{equation*}
  from $\R^{2n-1}$.
  We then identify $\R^{2n-1}$ with the subset $\R^{2n-1} \times \Set{0}$ of $\R^{2n}$, and apply \cref{thm:extension-vector-field-germ}.
\end{proof}

\begin{example}[Box with a tail] \label{exm:box-w-tail}
  Let $n \geq 2$.
  There is a smooth real-valued function on $(\R^{2n}, \omegacan)$ whose time-$1$ Hamiltonian flow excises the ``box with a tail''
  \begin{equation*}
    Z := \Pa{[-1, 1]^{2n-2} \times [0, \infty) \cup \Set{0}^{2n-2} \times [-1, \infty)} \times \Set{0}
  \end{equation*}
  from $(\R^{2n}, \omegacan)$. See \cref{fig:box-tail}.
  \ynote{We use \cref{exm:box-w-tail} in the proof of \cref{exm:narrow nbhd}.}
\end{example}

\begin{proof}
  This follows from \cref{cor:epigraph-hamiltonian-excision}, with the lower semi-continuous function $\lambda \colon \R^{2n-2} \to (-\infty, \infty]$ that takes $\Set{0}^{2n-2}$ to~$-1$, that takes $[-1, 1]^{2n-2} \setminus \Set{0}^{2n-2}$ to $0$, and that takes $\R^{2n-2} \setminus [-1, 1]^{2n-2}$ to $\infty$.
\end{proof}

\begin{figure}[htb]
  \centering
  \inputfigure{excision-b}
  \caption{Removing a box with a tail.}
  \label{fig:box-tail}
\end{figure}

\begin{example}[Cone] \label{exm:angle}
  Let $n \geq 2$.
  There is a smooth real-valued function on $(\R^{2n}, \omegacan)$ whose time-$1$ Hamiltonian flow excises the ``cone''
  \begin{equation*}
    Z := \Set{z \in \R^{2n} \mmid x_n \geq \sqrt{x_1^2 + y_1^2 + \dotsb + x_{n-1}^2 + y_{n-1}^2}, y_n = 0}.
  \end{equation*}
  from $(\R^{2n}, \omegacan)$.
  See \cref{fig:angle}.
\end{example}

\begin{proof}
  This follows from \cref{cor:epigraph-hamiltonian-excision}, with the lower semi-continuous function
  \begin{equation*} \begin{split}
    \lambda \colon \R^{2n-2} &\to (-\infty, \infty] \\
    (x_1, y_1, \dotsc, x_{n-1}, y_{n-1}) &\mapsto \sqrt{x_1^2 + y_1^2 + \dotsb + x_{n-1}^2 + y_{n-1}^2}.
  \end{split} \end{equation*}
\end{proof}

\begin{figure}[htb]
  \centering
  \inputfigure{excision-a}
  \caption{Removing a cone.}
  \label{fig:angle}
\end{figure}

\begin{example}[Narrowing neighbourhood] \label{exm:narrow nbhd}
  Let $n \geq 2$, let $Z \subset \R^{2n}$ be the ``box with a tail'' as in \cref{exm:box-w-tail}, and let $M \subset \R^{2n}$ be the open subset
  \begin{equation*}
    M := \Set{z \in \R^{2n} \mmid y_n \leq 0 \stext{or} \operatorname{dist}(z, Z) < \frac{1}{y_n}}.
  \end{equation*}
  Then there is a smooth real-valued function on $M$ whose time-$1$ Hamiltonian flow excises $Z$ from $M$.
\end{example}

\begin{proof}
  Combine \cref{exm:box-w-tail} with \cref{thm:locality-excision}.
\end{proof}

\begin{remark} \label{rem:ray-two-horns-auto}
  In \cref{cor:ray-two-horns} we saw how to excise a ``ray with two horns'' $Z$ from a symplectic manifold $M$ by composing of three time-$1$ maps of Hamiltonian flows.
  Interestingly, $Z$ itself cannot be excised by a time-$1$ flow of a vector field $X$.
  Indeed, assume otherwise.
  Let $x$ be the point on $Z$ whose neighbourhood in $Z$ is a union of three intervals emanating from the point.
  Let $0 < \eps < T_X(x)$. The time-$\eps$ flow of $X$ is a diffeomorphism $U \to V$ beween open subsets $U$, $V$ of $M$, which takes $x$ to a point $y \neq x$.
  By \cref{lem:time-one-excise}, it takes $U \cap Z$ into $V \cap Z$.
  But $y$ does not have any neighbourhood in $Z$ that is homeomorphic to a neighbourhood of $x$ in $Z$.
\eor
\end{remark}

\printbibliography

\authaddresses
\end{document}